\documentclass{amsart}

\usepackage[utf8]{inputenc}
\usepackage[margin=2cm,includeheadfoot,centering]{geometry} 
\usepackage{array}
\usepackage{xcolor}
\usepackage{cite}
\usepackage{tikz}
\usepackage{soul}
\usepackage{placeins}
\usepackage{fancyvrb} 


\usetikzlibrary{mindmap, backgrounds}
\usepackage{xcolor}
\usetikzlibrary{matrix,arrows,decorations.pathmorphing,automata, matrix, positioning, calc, shapes.multipart,decorations.pathreplacing,calligraphy}
\usepackage{graphicx}
\usepackage[config, labelfont={normalsize}]{caption,subfig}
\captionsetup[subfigure]{subrefformat=simple,labelformat=simple,listofformat=subsimple}

\usepackage{mathrsfs}
\definecolor{green}{RGB}{0,127,0}
\definecolor{red}{RGB}{191,0,0}
\usepackage[colorlinks,cite color=red,link color=green,pagebackref=true]{hyperref}
\usepackage[shortlabels]{enumitem}
\usepackage[vcentermath]{youngtab}
\usepackage{epigraph}
\usepackage{ytableau}
\usepackage[capitalize]{cleveref}
\usepackage{mathtools}
\usepackage{algorithm,algorithmicx,algpseudocode}
\usepackage[leqno]{amsmath}
\usepackage{latexsym,amsthm,amsfonts,amssymb,hyperref}
\usepackage[all]{xy} \SelectTips{eu}{}

%

%
%

\newcommand{\numberseries}{\mdseries}   
\newlength{\thmtopspace}                
\newlength{\thmbotspace}                
\newlength{\thmheadspace}               
\newlength{\thmindent}                  
\setlength{\thmtopspace}%
{0.7\baselineskip plus 0.35\baselineskip minus 0.2\baselineskip}
\setlength{\thmbotspace}%
{0.45\baselineskip plus 0.15\baselineskip minus 0.1\baselineskip} 
\setlength{\thmheadspace}{0.5em}
\setlength{\thmindent}{0pt}     

\newtheoremstyle{bfupright head,upright body}
                {\thmtopspace}{\thmbotspace}
                {\upshape}{\thmindent}{\bfseries}{.}{\thmheadspace}
                {{\numberseries \thmnumber{(#2) }}\thmnote{#3}}

\newtheoremstyle{fixed bf head,slanted body}
                {\thmtopspace}{\thmbotspace}{\slshape}
                {\thmindent}{\bfseries}{.}{\thmheadspace}
                {{\numberseries \thmnumber{(#2) }}\thmname{#1}\thmnote{ (#3)}}

\newtheoremstyle{fixed bf head,upright body}
                {\thmtopspace}{\thmbotspace}{\upshape}
                {\thmindent}{\bfseries}{.}{\thmheadspace}
                {{\numberseries \thmnumber{(#2) }}\thmname{#1}\thmnote{ (#3)}}

\newtheoremstyle{numbered paragraph}
                {\thmtopspace}{\thmbotspace}{\upshape}
                {\thmindent}{\upshape}{}{0pt}
                {{\numberseries \thmnumber{(#2) }}}

\theoremstyle{bfupright head,upright body}
\newtheorem{res}{}[section]             \newtheorem*{res*}{}
               \newtheorem*{bfhpg*}{}

\theoremstyle{theorem}
\newtheorem{thm}[res]{Theorem}          \newtheorem*{thm*}{Theorem}
\newtheorem{prp}[res]{Proposition}      \newtheorem*{prp*}{Proposition}
\newtheorem{rmk}[res]{Remark}           \newtheorem*{rmk*}{Remark}
            \newtheorem*{lem*}{Lemma}
          \newtheorem*{conj*}{Conjecture}
\newtheorem{example}[res]{Example}          \newtheorem*{example*}{Example}

\theoremstyle{fixed bf head,upright body}
       \newtheorem*{dfn*}{Definition}

           \newtheorem*{fact*}{Fact}

\theoremstyle{numbered paragraph}

\newlength{\thmlistleft}        
\newlength{\thmlistright}       
\newlength{\thmlistpartopsep}   
\newlength{\thmlisttopsep}      
\newlength{\thmlistparsep}      
\newlength{\thmlistitemsep}     

\setlength{\thmlistleft}{2.5em}
\setlength{\thmlistright}{0pt}
\setlength{\thmlistitemsep}{0.5ex}
\setlength{\thmlistparsep}{0pt}
\setlength{\thmlisttopsep}{1.5\thmlistitemsep}
\setlength{\thmlistpartopsep}{0pt}

\newcounter{prt}
  {\end{list}}%

\newenvironment{prf*}[1][Proof]{%
  \begin{proof}[\bf #1]
    \setcounter{equation}{0}
    }
  {\end{proof}
}
\renewcommand{\eqref}[1]{\pgref{eq:#1}}
\newcommand{\pgref}[1]{(\ref{#1})}

\newcommand{\op}{\operatorname}

\def\urltilda{\kern -.15em\lower .7ex\hbox{\~{}}\kern .04em}

\def\FF{{\mathbb F}}

\numberwithin{equation}{res}


\newcommand{\<}{\langle}
\newcolumntype{L}{>$l<$}
\newcommand{\rr}{\rangle}


\begin{document}

\title{Residual Intersections and Schubert Varieties}

\author[S. A. Filippini]{Sara Angela Filippini}

\address{University of Salento, Italy}

\email{saraangela.filippini@unisalento.it}

\author[X. Ni]{Xianglong Ni}

\address{University of California, Berkeley}

\email{xlni@berkeley.edu}

\author[J. Torres]{Jacinta Torres}

\address{Uniwersytet Jagiello\'nski, Krak\'ow, Poland}

\email{jacinta.torres@uj.edu.pl}

\author[J. Weyman]{Jerzy Weyman}

\address{Uniwersytet Jagiello\'nski, Krak\'ow, Poland}

\email{jerzy.weyman@uj.edu.pl}


\date{\today}

\keywords{perfect ideals, linkage classes}

\subjclass[2010]{13C99; 13H10.}

\begin{abstract}
Inspired by the work of Ulrich \cite{ulrich} and Huneke--Ulrich \cite{hunekeulrich1988}, we describe a pattern to show that the ideals of certain opposite embedded Schubert varieties (defined by this pattern) arise by taking residual intersections of two (geometrically linked) opposite Schubert varieties (which we called {\it Ulrich pairs} in \cite{FTW20}). This pattern is uniform for the ADE types. Some of the free resolutions of the Schubert varieties in question are important for the structure of finite free resolutions. Our proof is representation theoretical and uniform for our pattern, however it is possible to derive our results using case-by-case analysis and the aid of a computer.
\end{abstract}

\maketitle

\thispagestyle{empty}

\section{Introduction}

The theory of linkage and residual intersections has arisen in a natural way in enumerative geometry, intersection theory, the study of Rees rings and integral closures. Linkage has been extensively studied (see \cite{watanabe, rao, ulrich, HU, ulrich2023remarks } and the references therein); in particular it has given rise to the study of licci ideals, that is, the ideals which are in the linkage class of a complete intersection. On the other hand, residual intersections \cite{artin1972residual, cumming2007residual,hunekeulrich1988,kustin1992generating, HU, fulton1984excess}, have remained more obscure, although they have been more extensively studied in recent years \cite{ chardin2001hilbert,corso2002core,bruns1990resolution, boucca2019residual, hassanzadeh2016residual, hassanzadeh2024deformation}. In this paper we construct a family of examples of residual intersections using the defining ideals of certain opposite Schubert varieties defined by a pattern that we describe below. The opposite Schubert varieties in question have been subject of recent investigations \cite{samweyman2021schubert, FTW20}.

Let $D$ be a simply-laced Dynkin diagram and let $k$ be an extremal or minuscule vertex. We consider a rooted graph $\mathcal{G}_{k}$ defined by attaching an extra distinguished vertex to $k$. This graph has a $``Y"$ shape where we display the distinguished vertex at the top, and the edges of $G_k$ correspond to the vertices of $D$. See Figure \ref{graphgeny}. Let $G$ be a simply connected, reductive, complex algebraic group with Lie algebra associated to $D$.  Consider $P_{k} \subset G$ the parabolic subgroup of $G$ stabilising the fundamental weight $\omega_{k}$, and $V(\omega_{k})$ the irreducible finite-dimensional $G$-module of highest weight $\omega_{k}$, or the $k$-th fundamental representation of $G$. To each vertex $p$ in our graph $\mathcal{G}_{k}$ is  associated a homogeneous coordinate in $\mathbb{P}(V(\omega_{k}))$, which is an extremal Pl\"ucker coordinate. This coordinate defines uniquely the opposite Schubert variety $X^{p} $ in the partial flag variety $G/P_{k}$.

Let $u$ be the degree three vertex of $D$, assuming that we are not in type $A_{n}$. Now, if we start a ``walk'' along our Dynkin diagram towards $u$ from our extremal starting vertex $k = x_{c-2}$, once we reach $u$, there are precisely two vertices adjacent to $u$ which we have not visited during our walk. We denote these vertices $y_1$ and $z_1$. If $D$ is a type $A_{n}$ Dynkin diagram, then the vertices $y_1$ and $z_1$ are the vertices adjacent to $k$. These vertices correspond to edges in $\mathcal{G}_{k}$ which we label by the same letters. We denote the two distinct vertices adjacent to each of them by $p_{y_1}$ and $p_{z_1}$, respectively. 

\subsection{Main result}
The starting point and motivation of the present work is the following theorem, which readily follows from the work of Ulrich \cite{ulrich}.

\begin{thm}
\label{thm:ulrichpairs}
The defining ideals of the opposite Schubert varieties corresponding to $p_{y_1}$ and $p_{z_1}$ are linked by the regular sequence formed by the coordinates corresponding to vertices in $\mathcal{G}_{k}$ that lie along the branch which does not contain either $p_{y_1}$ or $p_{z_1}$.
\end{thm}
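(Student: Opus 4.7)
The plan is to verify Ulrich's geometric linkage criterion from \cite{ulrich}: two Cohen--Macaulay prime ideals $I_y, I_z$ of the same height $c$ in a Cohen--Macaulay ring $R$ are geometrically linked by a regular sequence $\mathbf{f}$ whenever $\mathbf{f} \subseteq I_y \cap I_z$ has length $c$ and the two subvarieties $V(I_y), V(I_z)$ together exhaust $V(\mathbf{f})$ without sharing an irreducible component. Let $R$ denote the homogeneous coordinate ring of the embedding $G/P_k \hookrightarrow \mathbb{P}(V(\omega_k))$, write $I_y, I_z \subset R$ for the defining ideals of $X^{p_{y_1}}$ and $X^{p_{z_1}}$, and let $\mathbf{f} = (f_1, \ldots, f_c)$ be the Plücker coordinates indexed by the vertices of $\mathcal{G}_k$ on the unique branch that avoids both $p_{y_1}$ and $p_{z_1}$. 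The length $c$ of this branch matches the codimension of each of the two Schubert varieties in $G/P_k$ by the Y-construction of $\mathcal{G}_k$, so the candidates are set up correctly.

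First I would verify $\mathbf{f} \subseteq I_y \cap I_z$. Each $f_i$ is an extremal weight vector, and the opposite Schubert variety $X^p$ is cut out set-theoretically by all Plücker coordinates whose indexing translate of $\omega_k$ is not $\leq$ the translate labeling $p$ in the natural poset on the Weyl-orbit $W\omega_k$. Because the vertices on the avoided branch sit on the ``other side'' of the Y from both $p_{y_1}$ and $p_{z_1}$, the required non-comparability is immediate from inspection of $\mathcal{G}_k$. Cohen--Macaulayness of $R$ and of $R/I_y$, $R/I_z$ is classical for Schubert varieties in flag manifolds, so in particular $\mathbf{f}$, being contained in the Cohen--Macaulay ideal $I_y$ of height $c$ and having length $c$, is automatically a regular sequence in $R$.

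It then remains to show that $V(\mathbf{f}) = X^{p_{y_1}} \cup X^{p_{z_1}}$ as closed subschemes, without a shared component. The set-theoretic equality is a Bruhat-order calculation: the Schubert subvarieties of $G/P_k$ lying in $V(\mathbf{f})$ are precisely those whose defining translate of $\omega_k$ is bounded above by some element along the $y$-branch or along the $z$-branch, and the two maximal such translates are those labeling $p_{y_1}$ and $p_{z_1}$; moreover no Weyl translate lies under both, so the two Schubert varieties share no component. To upgrade set-theoretic equality to the scheme-theoretic identity required by Ulrich's criterion, I would compare degrees, using the Chevalley formula to expand $[V(\mathbf{f})]$ in the Schubert basis of the Chow ring of $G/P_k$ and checking that $\deg V(\mathbf{f}) = \deg X^{p_{y_1}} + \deg X^{p_{z_1}}$.

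The main obstacle is precisely this last degree identity: it is the combinatorial miracle that makes the Y-pattern produce a linked pair rather than an arbitrary complete intersection containing them. Representation-theoretically, the equality reduces to a statement about multiplicities of the extremal weight vectors indexed by the avoided branch when one decomposes the relevant tensor product on $V(\omega_k)$, and the uniformity of the Y-pattern across ADE types is what allows the identity to be verified without case analysis; once it is in hand, the linkage conclusion is immediate from \cite{ulrich}.
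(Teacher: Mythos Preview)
The paper does not prove Theorem~\ref{thm:ulrichpairs} on its own---it simply cites \cite{ulrich}---but the linkage is the base case of Theorem~\ref{ourthm}, and the argument in Section~4 takes a route quite different from yours. Rather than a degree comparison, the paper invokes the Lakshmibai--Littelmann result (Theorem~\ref{definingunions}, from standard monomial theory) that the ideal of a union of opposite Schubert varieties is already generated in degree one, together with the crystal-graph argument in part~(1) of Theorem~\ref{ourthm} identifying the degree-one parts of $I_y$ and $I_z$ as exactly the extremal Pl\"ucker coordinates along the two arms. The intersection of those degree-one pieces is visibly the set $\mathbf{f}$ of coordinates on the avoided branch, so $I_y\cap I_z=(\mathbf{f})$ holds scheme-theoretically with no further work, and geometric linkage follows as in Remark~\ref{geometricri}.

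Your outline has a genuine gap at precisely the step you flag as the main obstacle. You never establish the degree identity $\deg V(\mathbf{f})=\deg X^{p_{y_1}}+\deg X^{p_{z_1}}$; saying it ``reduces to a statement about multiplicities'' which ``the uniformity of the Y-pattern allows to be verified without case analysis'' is a hope, not an argument. In the minuscule cases one can push this through (the Chevalley coefficients are all $1$ and the only length-$c$ cosets are the two branch points), but the theorem also covers non-minuscule extremal nodes, where the Chevalley expansion of $[H]^c$ is more delicate. The paper's route via standard monomial theory sidesteps this computation entirely and works uniformly. A smaller correction: containment of a length-$c$ sequence in a height-$c$ Cohen--Macaulay ideal does not by itself force regularity---you need $\operatorname{ht}(\mathbf{f})=c$, which in your outline is only available \emph{after} the set-theoretic equality $V(\mathbf{f})=X^{p_{y_1}}\cup X^{p_{z_1}}$, so the regularity claim should be deferred to that point.
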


In \cite{FTW20} we refer to such pairs of opposite Schubert varieties as \textit{Ulrich pairs}. Before we state our main theorem, we need to introduce some notation. \\

We will label the vertices of $\mathcal{G}_{k}$ as defined by the picture in Figure \ref{tpqr}, is known as the $T_{c-1,d+1,t+1}$ diagram, where $c,d,t$ are integers such that $\frac{1}{c-1}+ \frac{1}{d+1} + \frac{1}{t+1} \geq 1$. We will label the nodes of $\mathcal{G}_{k}$ by coordinates $p_{j}$ with $j$ as indicated in Figure \ref{graphgeny}. Below, we denote the opposite Schubert variety corresponding to the Pl\"ucker coordinate $p_j$ by $X^j$.

\begin{figure}
\includegraphics[scale = 0.7]{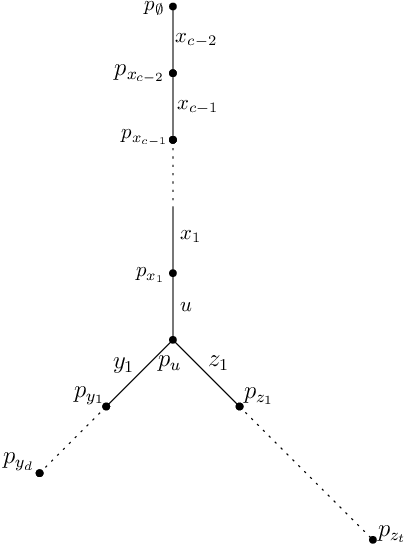}
\caption{The graph $\mathcal{G}_{k}$. }
\label{graphgeny}
\end{figure}

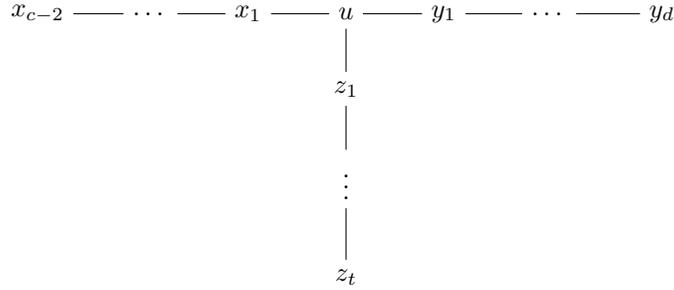
\begin{figure}
\begin{center}
\begin{tikzpicture}
\node (1) at (0,0) {$x_{c-2}$} ;
\node (2) at (1.5,0) {$\dots$} ;
\node (3) at (2.8,0) {$x_{1}$} ;
\node (4) at (4.1,0) {$u$} ;
\node (5) at (5.4,0) {$y_1$} ;
\node (6) at (6.8,0) {$\dots$} ;
\node (7) at (8.3,0) {$y_d$} ;

\node (8) at (4.1,-1) {$z_1$} ;
\node (9) at (4.1,-2.2) {$\vdots$} ;
\node (10) at (4.1,-3.5) {$z_t$} ;

\draw (1)-- (2) -- (3)--(4)--(5)--(6)--(7);
\draw (4) -- (8) -- (9) -- (10);
\end{tikzpicture}
\end{center}
\caption{The graph $T_{c-1,d+1,t+1}$. }
\label{tpqr}
\end{figure}

\begin{thm}\hfill
\label{ourthm}
\begin{enumerate}
\item The defining ideal $I(X^{y_1})$, respectively $I(X^{z_1})$ of the  opposite Schubert variety $X^{y_{1}}$, respectively $X^{z_1}$, is defined, scheme-theoretically,  by the extremal Pl\"ucker coordinates on the right, respectively left arm of $\mathcal{G}_{k}$. 
\item Let $d\ge l\ge 1$. The defining ideal $I(X^{y_l})$ of the  opposite Schubert variety $X^{y_l}$ of codimension $l+c$ on the left arm of the graph $\mathcal{G}_{k}$ is a residual intersection
$$I(X^{y_l})= (p_{\emptyset},\ldots ,p_{y_{l-1}}): I(X^{z_1}).$$
\item Let $t\ge m\ge 1$. The defining ideal $I(X^{z_m})$ of the  opposite Schubert variety $X^{z_m}$ of codimension $m+c$ on the left arm of the graph $\mathcal{G}_{k}$ is a residual intersection
$$I(X^{z_m})= (p_{\emptyset},\ldots ,p_{z_{m-1}}): I(X^{y_1}).$$
\end{enumerate}

\end{thm}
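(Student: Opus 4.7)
The plan is to prove Parts~(2) and (3) simultaneously by induction on $l$ (resp.\ $m$), along the chain of nested opposite Schubert varieties on the two lower arms of $\mathcal{G}_{k}$. Parts~(2) and (3) are exchanged by swapping the $y$- and $z$-arms at the central vertex $p_u$, so we focus on~(2). The base case $l=1$ is exactly Theorem~\ref{thm:ulrichpairs}: the generating set $(p_{\emptyset},\ldots,p_{y_{0}})$ is the regular sequence on the arm of $\mathcal{G}_{k}$ containing neither $p_{y_{1}}$ nor $p_{z_{1}}$, and the identity $(p_{\emptyset},\ldots,p_{y_{0}}):I(X^{z_{1}})=I(X^{y_{1}})$ is precisely the Ulrich linkage.

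For the inductive step set $J_{l}:=J_{l-1}+(p_{y_{l-1}})$. The inclusion $I(X^{y_{l}})\subseteq J_{l}:I(X^{z_{1}})$ follows inductively from the scheme-theoretic identity $I(X^{y_{l}})=I(X^{y_{l-1}})+(p_{y_{l-1}})$ (reflecting the geometric identity $X^{y_{l}}=X^{y_{l-1}}\cap V(p_{y_{l-1}})$): multiplying by $I(X^{z_{1}})$ distributes into $I(X^{y_{l-1}})\cdot I(X^{z_{1}})\subseteq J_{l-1}$ (the inductive hypothesis) plus $p_{y_{l-1}}\cdot I(X^{z_{1}})\subseteq(p_{y_{l-1}})$, both contained in $J_{l}$. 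For the reverse inclusion $J_{l}:I(X^{z_{1}})\subseteq I(X^{y_{l}})$ we plan to argue geometrically: by successively intersecting the Ulrich-pair decomposition $V(J_{1})=X^{y_{1}}\cup X^{z_{1}}$ with the hypersurfaces $V(p_{y_{1}}),\ldots,V(p_{y_{l-1}})$ we obtain $V(J_{l})=X^{y_{l}}\cup X^{z_{1}}$ set-theoretically; combined with the Cohen--Macaulayness of opposite Schubert varieties in the minuscule $G/P_{k}$, this promotes to the scheme-theoretic identity $J_{l}=I(X^{y_{l}})\cap I(X^{z_{1}})$. Since $X^{y_{l}}$ and $X^{z_{1}}$ share no irreducible component, this yields $J_{l}:I(X^{z_{1}})=I(X^{y_{l}})$, closing the induction.

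The main technical obstacle is promoting the set-theoretic decomposition $V(J_{l})=X^{y_{l}}\cup X^{z_{1}}$ to the scheme-theoretic equality $J_{l}=I(X^{y_{l}})\cap I(X^{z_{1}})$, i.e.\ ruling out embedded components of $V(J_l)$ along $X^{y_l}\cap X^{z_1}$. This is where we expect the representation-theoretic uniformity announced in the abstract to play a decisive role: using the quadratic Pl\"ucker relations on the minuscule representation $V(\omega_{k})$, equivalently the straightening law on the minuscule weight poset, one identifies a $G$-equivariant subrepresentation of $V(\omega_{k})\otimes V(\omega_{k})$ which controls the crucial products $p_{z_i}p_{z_j}$ and $p_{z_i}p_{y_r}$ (for $r\ge l$) and whose image in the homogeneous coordinate ring of $G/P_{k}$ is forced into $J_{l}$ by the combinatorics of walks on $\mathcal{G}_{k}$ from $p_\emptyset$ to $p_{y_{l-1}}$.
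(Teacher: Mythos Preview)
Your inductive approach breaks down at the very first step. The identity $I(X^{y_l})=I(X^{y_{l-1}})+(p_{y_{l-1}})$ is false for $l\ge 2$: the zero locus of $p_{y_{l-1}}$ on $X^{y_{l-1}}$ is the full Schubert boundary $\partial X^{y_{l-1}}=\bigcup_{w'\gtrdot y_{l-1}} X^{w'}$, not $X^{y_l}$ alone. Once you are past the central vertex $p_u$, the element $y_{l-1}$ has several covers in the Bruhat order on $W/W_k$, only one of which lies on the $y$-arm of $\mathcal{G}_k$. Already in type $A$ on $\mathrm{Gr}(k,n)$ with $y_{l-1}=\{1,\ldots,k-1,k+s\}$ for $s\ge 1$, both $\{1,\ldots,k-1,k+s+1\}$ and $\{1,\ldots,k-2,k,k+s\}$ are covers, so $I(X^{y_{l-1}})+(p_{y_{l-1}})$ is strictly smaller than $I(X^{y_l})$. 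This invalidates your argument for the inclusion $I(X^{y_l})\subseteq J_l:I(X^{z_1})$, and the same obstruction prevents the ``successive hypersurface'' picture $V(J_l)=X^{y_l}\cup X^{z_1}$ you invoke for the reverse inclusion: each new cut by $V(p_{y_{j}})$ introduces additional boundary components of $X^{y_{j}}$, not just $X^{y_{j+1}}$. You also never address Part~(1), which is an independent statement about which Pl\"ucker coordinates generate $I(X^{y_1})$ and $I(X^{z_1})$; this is not a tautology, and in the non-minuscule cases it requires knowing that no non-extremal linear equations are needed.

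The paper's proof avoids induction altogether. It first establishes Part~(1) by a Levi-branching argument on the crystal $B(\omega_k)$, showing that the linear part of $I(X^{z_1})$ is exactly the span of the extremal coordinates on the opposite arm. Then for Parts~(2)--(3) it computes $I(X^{z_1})\cap I(X^{y_l})$ in one stroke, using the fact (Lakshmibai--Littelmann, the paper's Theorem~\ref{definingunions}) that the ideal of a union of opposite Schubert varieties is generated in degree one: the degree-one part of the intersection consists of those linear forms in $I(X^{z_1})$---namely $p_\emptyset,\ldots,p_u,p_{y_1},\ldots,p_{y_d}$ by Part~(1)---which also vanish on $X^{y_l}$, leaving exactly $p_\emptyset,\ldots,p_{y_{l-1}}$. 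This gives $\mu(I(X^{z_1})\cap I(X^{y_l}))=l+c=\operatorname{ht}(I(X^{y_l}))$, and the residual-intersection statement follows from Remark~\ref{geometricri}. The scheme-theoretic control you were hoping to extract from Cohen--Macaulayness and straightening is supplied instead, cleanly and uniformly, by the standard-monomial description of $I(X^{z_1}\cup X^{y_l})$.
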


\subsection{Organization of the paper} In Section 2 we collect basics on commutative algebra, and in Section 3 we collect basics on Schubert varieties. In Section 4 we prove our main result, Theorem \ref{ourthm}. Later, in Section 5 we collect non-minuscule examples, exhibiting a method of how one may obtain a case by case proof of our main result which avoids certain general representation theoretic arguments. In Section 6 we present explicit calculations in the minuscule cases, most of them assisted by Macaulay2.  

\section*{Acknowledgements}
J.T. and J.W. were supported by the grant MAESTRO NCN-UMO-2019/34/A/ST1/00263 - Research in Commutative Algebra and Representation
Theory. S.A.F. and J.W. were supported by NAWA POWROTY - \break PPN/PPO/2018/1/00013/U/00001 - Applications of Lie algebras to Commutative Algebra. J.W. was supported by the OPUS grant National Science Centre, Poland grant UMO-2018/29/BST1/01290. J.T. was supported by the grant SONATA NCN UMO-2021/43/D/ST1/02290.

\section{Commutative Algebra basics}\label{sec:commutatuivealgebrabasics}

Let $(R,\frak{m})$ denote a Noetherian local ring (resp. a polynomial ring over a field).
For a proper (resp. homogeneous) ideal $I$ we denote by $\mathrm{ht}(I)$ its {\it height}, by $\mathrm{grade}(I)$ the maximal length of an $R$-regular sequence contained in $I$,  by $\mu(I)$ the minimal number of generators of $I$, and by $d(I) = \mu(I) - \mathrm{grade}(I)$ its deviation. 
An ideal $I$ is said to be a {\it complete intersection} (resp. an {\it almost complete intersection}) if $d(I) = 0$ (resp. if $d(I) = 1$).
 
Furthermore, if $R$ is Cohen-Macaulay with $\dim R =n$, let  $t(I) = \dim \mathrm{Ext}_R^n (R/ {\frak m}, R)$ be the {\it Cohen-Macaulay type} of $I$. An ideal $I$ is {\it perfect} if its grade coincides with its projective dimension $\mathrm{grade}(I) = \mathrm{pd}_R(R/I)$. We say that $I$ is {\it Gorenstein} if $I$ is perfect and $t(I)=1$. The \emph{height} $\mathrm{ht}(I)$ of $I$ is the minimum of the heights of the prime ideals in $R$ that contain it. The height of a \textit{prime} ideal is simply the codimension of the variety defined by it. A proper ideal $I$ in $R$ is called \textit{unmixed} if  $\mathrm{ht}(P) = \mathrm{ht}(I)$ for every associated prime $P$ of $R/I$. As $R$ is Cohen-Macaulay, we have that $\mathrm{grade}(I) = \mathrm{ht}(I)$ for any proper ideal $I$, and by slight abuse of terminology we will often call this the codimension of $I$.\\

\subsection{Linkage} Let $I, J \subset R$ be ideals. The \emph{colon ideal} or \emph{ideal quotient} is $I:J := \left\{r \in R | rJ \subset I \right\}$. The ideals $I,J$ in R are said to be {\it linked} if they are both proper and if there exists a regular sequence $a= \{a_1, \ldots, a_g\}$ in $R$ such that $J = (a):I$ and $I = (a):J$. If $R$ is Cohen--Macaulay and $I, J$ are linked, then $I$ and $J$ are unmixed of the same height $g$. A link is called \textit{geometric} if $\mathrm{ht}(I+J) \geq g+1$. In this case, 

\begin{align}
\label{geometriclinkage}
(a) = I \cap J.
\end{align}

\noindent Moreover, if (\ref{geometriclinkage}) holds, $I$ and $J$ are both unmixed of the same height and have no common prime components, then they are (geometrically) linked. An ideal $I$ is said to be \textit{licci} if it is in the linkage class of a complete intersection. If $\mathrm{grade}(I)=2$, then $I$ is perfect if and only if it is licci \cite{apery, gaeta, PS}. Only the ``if'' implication holds when $\mathrm{grade}(I)\geq 3$. Various other classes of licci ideals have been studied in recent decades \cite{watanabe, guerrieri2022higher, ulrich, HU }.

 
 \subsection{Residual Intersections} Let $R$ above be additionally Cohen--Macaulay, $I \subset R$ an ideal, $g = \mathrm{ht}(I)$ and $s \geq g$. We say that a proper ideal $K$ of $R$ is an $s$-\textit{residual intersection} of $I$ if $K = A:I$ for some ideal $A \subset I$ such that $\mathrm{ht}(K) \geq s \geq \mu(A)$.  Notice that this condition implies that $\mathrm{ht}(A) = g$. If $R$ is Gorenstein and $I$ is unmixed, then the concept of $g$-residual intersection is the same as linkage. \\

  In \cite{H83} Huneke proved that the ideal $I(X)$, generated by the maximal order minors of a generic $r\times s$ matrix $X$, is the residual intersection of a height $2$ perfect ideal.

\begin{example}
\label{grade2}
    For $r\leq s$ let $A$ be an $s\times(s+1)$-matrix and $B$ the submatrix given by its right-most $r$ columns.  Then $I(X) = (a_1, \ldots, a_{s-r+1}:I_s(A))$ is a residual intersection of $I_s(A)$. Note that the codimension of the ideal $I_s(A)$ is $(s-(s-1))\cdot(s +1 - (s-1)) = 2$. Denote by $a_i$ the $s\times s$ minor of $A$ obtained by deleting the $i$-th  column.
\end{example}

 Kustin and Ulrich in \cite[Theorem 10.2]{KU} described residual intersections of grade $3$ Gorenstein ideals.

 \begin{example} 
\label{grade3gorenstein}
 Let $X$ be a generic $(2r + 1)\times(2r + 1)$ skew-symmetric matrix, $Y$ a generic $s\times (2r + 1)$ matrix with $s\geq 3$. Denote by $p_1, \ldots, p_{2r+1}$ the $2r\times 2r$ Pfaffians of $X$. Now consider the skew-symmetric matrix 
\[A = \begin{pmatrix}
X & Y\\
- Y^t & 0
\end{pmatrix},\]
and let $J$ be the ideal generated by the Pfaffians of all sizes of $A$ which contain $X$. Then $J$ is a residual intersection of the codimension $3$ ideal generated by the Pfaffians of $X$.
\end{example}

In Theorem \ref{ourthm} we extend Kustin--Ulrich's result to ADE types, where we naturally go beyond codimension $3$. The following remark follows from the definitions. It will be crucial in the proof of our main result Theorem \ref{ourthm}.

\begin{rmk}
\label{geometricri}
Let $I$ and $K$ be prime ideals of heights $g,s$ respectively such that $g \leq s$ and $I \nsubseteq K$, and $A = I \cap K$. It follows that $K = A:I$, and if $\mu(A) = s$ then by definition $K$ is an $s$-residual intersection.
\end{rmk}

We refer the reader to \cite[Introduction, Sections 4, 10]{KU}, from where we have borrowed much of the material for this section. \\

\section{Basics on Schubert Varieties}\label{sec:schubertbasics}

 Let $G\supset B \supset T$ be a reductive complex algebraic group with simply laced Dynkin diagram, Borel subgroup $B$ and maximal torus $T$, and let $n$ be the rank of $G$. Let $B^{-}$ be the opposite Borel subgroup of $B$, that is, the unique Borel subgroup such that $B \cap B^{-} = T$. Let $X:= \operatorname{Hom}(T,\mathbb{C}^{\times})$ be the integral weight lattice for $T$, and $\left\{\omega_{k}\right\}_{1 \leq k \leq n} \subset X$ be the set of fundamental weights that span $X$ as a lattice. Moreover, let $R\subset X$ be the set of roots, let $W$ be the Weyl group of symmetries of $R$ and $R^{+}\subset R$ be the choice of positive roots corresponding to $B$. We denote by $P_{k} \supset B$, respectively $W_{k} \subset W$ be the corresponding parabolic subgroup of $G$ respectively of $W$ stabilizing $\omega_{k}$. We consider the irreducible, finite dimensional  complex representation of $G$ of highest weight $\omega_{k}$ and denote it by $V(\omega_{k})$.  Now the quotient $G/P_{k}$ has the structure of a smooth projective variety. Moreover, it comes together with a canonical embedding $G/P_{k} \hookrightarrow \mathbb{P}(V(\omega_{k}))$ given by $gP_k \mapsto g v_{\omega_{k}}$, where $v_{\omega_{k}}$ is the highest weight vector. We will call the coordinates in $V(\omega_k)^*$ \textit{Pl\"ucker coordinates}, (borrowing the name from the coordinates of $\mathbb{P}( \bigwedge^{k} \mathbb{C}^{n})$, which with our notation will be the Pl\"ucker coordinates in type $A_{n}$). The \textit{extremal Pl\"ucker coordinates} in $V(\omega_k)^*$ are those dual to the $W$-orbit of $v_{\omega_k}$; we write $p_w$ for the coordinate dual to $wv_{\omega_k}$. To define opposite Schubert varieties in $G/P_{k}$, we consider the cosets $W/W_{k}$ and the Bruhat decomposition \[G/P_{k} = \underset{w \in W/W_{k}}{\bigsqcup} B^{-}wP_{k}. \] 
 The opposite Schubert cells are the $B^{-}$-orbits $C^{w}: = B^{-}wP_{k}$, and their closures $X^{w} : = \overline{B^{-}wP_{k}}$ are the opposite Schubert varieties. If $w$ is a minimal length representative of its coset in $W/W_{k}$, then its length $l(w)$ is the codimension of the opposite Schubert variety $X^{w}$. The cell  $C^{id}$ is called the opposite big open cell. We denote by  $Y^{w}:= X^{w} \cap C^{id}$ the corresponding intersection. 
 
\begin{example}
    \label{ex:gln}
The first example is $G = \operatorname{GL}(n ,\mathbb{C}), T \subset G$ are the invertible diagonal matrices, and $B$ are the upper triangular matrices.  In this case, we have $X \cong \mathbb{Z}^{n}$ with coordinate vectors $\varepsilon_{k}$ and $\omega_{k} = \sum^{k}_{j=1} \varepsilon_{j}$ for $1 \leq k \leq n$; the Weyl group $W = S_{n}$ is the symmetric group on $n$ letters. The fundamental representations are given by $V(\omega_{k})  = \bigwedge^{k} \mathbb{C}^{n}$.  The varieties $G/P_{k}$ are precisely the grassmannians $Gr(k, n)$ equipped with the Pl\"ucker embedding $Gr(k, n) \hookrightarrow \mathbb{P}( \bigwedge^{k} \mathbb{C}^{n})$.
\end{example}
 
  \subsection{Defining equations of opposite Schubert varieties and their unions}
 
The defining ideals  $I(X^{w}) \subset \mathbb{C}[G/P_{k}]$ are, set-theoretically, defined by the quadrics defining  $G/P_{k}$ in $ \mathbb{P}(V(\omega_k))$ and the dual extremal Pl\"ucker coordinates given by

\begin{align}
\label{set}
 \left\{p_{\tau } : \tau \in W/W_{P_{k}}, \tau \ngeq w \right\}.
\end{align}
 
 If the representation $V(\omega_{k})$ is minuscule, then the extremal Pl\"ucker coordinates span $V(\omega_k)^*$ and \ref{set} define $X^w$ ideal-theoretically in $G/P_k$. We will however focus on a much more general picture, where we do not assume that the representations at hand are minuscule.

 Let $\tau \in W/W_{P_k}$ and let $ V(\tau \omega_k)^{\operatorname{opp}} \subset V(\omega_k)$ be the associated opposite Demazure module. We define it in our set-up as the $B^{-}$-submodule of $V(\omega_k)$ generated by $\tau v_{\omega_k}$, where $v_{\omega_k}$ is a highest weight vector. The inclusion embedding $ V(\tau \omega_k)^{\operatorname{opp}}  \hookrightarrow V(\omega_k)$ dually induces a  restriction map 
  $\operatorname{res}^{\tau}:V(\omega_k)^{*} \rightarrow {V(\tau \omega_k)^{\operatorname{opp}}}^{*}.$

 \begin{thm} \cite[Theorem 16]{lakshmibai2003richardson}
 \label{defining}
Scheme-theoretically, opposite Schubert varieties and their unions are defined linearly as subvarieties of $G/P$. In particular: let $r_{1}, \dots , r_{s} \in S^{2}(V(\omega_k)^{*})$ be the quadratic generators of the homogeneous vanishing ideal of $G/P \subset \mathbb{P}(V(\omega_k))$. Then $\left\{r_{1}, \dots , r_{s}\right\} \cup  \operatorname{ker}(res^{\tau})$  is a generating set for the homogeneous vanishing ideal of $X^{\tau} \subseteq \mathbb{P}(V(\omega_k))$. 
 \end{thm}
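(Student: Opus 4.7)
The plan is to pass to homogeneous coordinate rings and exploit the representation-theoretic description of global sections of line bundles on $X^{\tau}$.

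First I would invoke the Borel--Weil identification $H^{0}(G/P_{k},\mathcal{O}(m)) \cong V(m\omega_{k})^{*}$, so that the homogeneous coordinate ring of $G/P_{k}$ equals $\bigoplus_{m\geq 0} V(m\omega_{k})^{*}$, with relations in $\operatorname{Sym}(V(\omega_{k})^{*})$ generated by the quadrics $r_{1},\dots,r_{s}$ by hypothesis. Next I would invoke Ramanathan's theorem: opposite Schubert varieties are projectively normal and Frobenius split, so the restriction
\[ H^{0}(G/P_{k},\mathcal{O}(m)) \longrightarrow H^{0}(X^{\tau},\mathcal{O}(m)) \]
is surjective, with image naturally identified with the dual $V_{\tau,m}^{*}$ of the opposite Demazure submodule $V_{\tau,m} \subseteq V(m\omega_{k})$ generated under $B^{-}$ by $\tau v_{m\omega_{k}}$. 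In degree one the restriction is exactly $\operatorname{res}^{\tau}$, so $\ker(\operatorname{res}^{\tau})$ is the degree-$1$ piece of the homogeneous ideal of $X^{\tau}$ inside the coordinate ring of $G/P_{k}$.

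The theorem therefore reduces to showing that the graded ideal $\bigoplus_{m\geq 1}\ker\bigl(V(m\omega_{k})^{*} \twoheadrightarrow V_{\tau,m}^{*}\bigr)$ is generated in degree one. Dually, this is the surjectivity of the multiplication map
\[ V(\omega_{k})^{*} \otimes V_{\tau,m}^{*} \longrightarrow V_{\tau,m+1}^{*} \]
for every $m\geq 0$, which I would deduce from the cohomology vanishing $H^{1}(X^{\tau},\mathcal{I}(m)) = 0$, where $\mathcal{I}$ is the ideal sheaf of $X^{\tau}$ inside $G/P_{k}$. This vanishing is a standard consequence of the compatible Frobenius splitting of the pair $X^{\tau} \subset G/P_{k}$ (Mehta--Ramanathan; see the Brion--Kumar monograph).

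The main obstacle is exactly this degree-one generation. The set-theoretic equality of $X^{\tau}$ with the zero locus of $\{r_{i}\}\cup\ker(\operatorname{res}^{\tau})$ is an easy Bruhat-order argument using weights: the weights of $V(\tau\omega_{k})^{\operatorname{opp}}$ lie in $\tau\omega_{k} - \sum_{\alpha \in R^{+}}\mathbb{Z}_{\geq 0}\,\alpha$, so the $T$-fixed points of $\mathbb{P}(V(\tau\omega_{k})^{\operatorname{opp}})\cap G/P_{k}$ are precisely the $\sigma P_{k}$ with $\sigma \geq \tau$ in Bruhat order. Upgrading this to a scheme-theoretic statement genuinely requires the cohomological input above. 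A uniform alternative that avoids characteristic-$p$ methods is the standard monomial theory of Lakshmibai--Seshadri, which furnishes an explicit basis of $V_{\tau,m}^{*}$ whose complement inside a standard basis of $V(m\omega_{k})^{*}$ straightens, via quadratic relations, into the ideal generated by $\ker(\operatorname{res}^{\tau})$.
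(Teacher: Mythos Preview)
The paper does not give its own proof of this theorem; it is simply quoted from Lakshmibai--Littelmann \cite{lakshmibai2003richardson}, whose argument is via standard monomial theory. In fact, the paper's proof of the subsequent Theorem~\ref{definingunions} explicitly relies on the SMT machinery from that reference. So the approach you mention at the very end as an ``alternative'' is precisely the one the cited source uses.

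Your primary route via Frobenius splitting is a legitimate and well-known alternative (this is essentially the Ramanan--Ramanathan argument, as in the Brion--Kumar monograph), but there is a genuine gap in your reduction step. You write that degree-one generation of the ideal $I=\bigoplus_m \ker\bigl(V(m\omega_k)^*\to V_{\tau,m}^*\bigr)$ is ``dually'' the surjectivity of
\[
V(\omega_k)^*\otimes V_{\tau,m}^*\longrightarrow V_{\tau,m+1}^*.
\]
This is not correct: that surjectivity is degree-one generation of the \emph{quotient} ring $B=\bigoplus_m V_{\tau,m}^*$, not of the ideal. A snake-lemma chase on $0\to I\to A\to B\to 0$ shows that surjectivity of $A_1\otimes A_m\to A_{m+1}$ and $A_1\otimes B_m\to B_{m+1}$ together only yield that $\operatorname{coker}(A_1\otimes I_m\to I_{m+1})$ is a quotient of $\ker(A_1\otimes B_m\to B_{m+1})$, which need not vanish. (Concretely: take $A=k[x,y,z]$, $I=(x,y^2)$; then $B=k[y,z]/(y^2)$ is generated in degree one, yet $I$ is not.) What you actually need is surjectivity of
\[
H^0(G/P_k,\mathcal{O}(m))\otimes H^0(G/P_k,\mathcal{I}(1))\longrightarrow H^0(G/P_k,\mathcal{I}(m+1)),
\]
which the single vanishing $H^1(\mathcal{I}(m))=0$ does not give by itself (and note $\mathcal{I}$ lives on $G/P_k$, not on $X^\tau$). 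The Frobenius-splitting proof obtains this via a diagonal splitting of $G/P_k\times G/P_k$ compatible with the subvariety $X^\tau\times G/P_k$, not merely from splitting of the pair $X^\tau\subset G/P_k$. Once you invoke the correct vanishing, the argument goes through.
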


 \begin{thm}
 \label{definingunions}
The scheme-theoretic defining ideal of the union of two opposite Schubert varieties   $I(X^{\tau}\cup X^{\nu})$ is generated in degree one by the dual Pl\"ucker coordinates vanishing on both  $V(\tau \omega_k)^{\operatorname{opp}}$ and $V(\nu \omega_k)^{\operatorname{opp}}$. 
 \end{thm}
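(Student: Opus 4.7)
The plan is to reduce to Theorem \ref{defining} applied individually to $X^\tau$ and $X^\nu$. Scheme-theoretically, $I(X^\tau \cup X^\nu) = I(X^\tau) \cap I(X^\nu)$ inside the homogeneous coordinate ring of $G/P_k$. By Theorem \ref{defining}, each of $I(X^\tau)$ and $I(X^\nu)$ is generated modulo the quadrics defining $G/P_k$ by its degree-one part, namely $\ker(\operatorname{res}^\tau)$ and $\ker(\operatorname{res}^\nu)$ inside $V(\omega_k)^*$. The statement therefore reduces to showing that the intersection of these two ideals, each generated in degree one, is itself generated in degree one, by $\ker(\operatorname{res}^\tau) \cap \ker(\operatorname{res}^\nu)$, i.e.\ by the linear forms vanishing on both opposite Demazure modules.

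The $\supseteq$ inclusion is immediate. For the reverse, I would invoke standard monomial theory on $G/P_k$ \`a la Lakshmibai--Seshadri (and its extension by Littelmann to arbitrary simply-laced types via LS-paths). This provides a basis of $\mathbb{C}[G/P_k]$ consisting of standard monomials in Pl\"ucker coordinates, compatible with all opposite Schubert varieties in the sense that for each $\sigma \in W/W_{P_k}$, the ideal $I(X^\sigma)$ has a basis of standard monomials whose smallest factor $p_w$ satisfies $w \not\le \sigma$. Intersecting, $I(X^\tau) \cap I(X^\nu)$ then has a basis of standard monomials whose smallest factor satisfies both $w \not\le \tau$ and $w \not\le \nu$; such a factor lies in $\ker(\operatorname{res}^\tau) \cap \ker(\operatorname{res}^\nu)$, and the full monomial therefore lies in the ideal generated by these common linear forms.

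The main obstacle will be the non-minuscule case. When $V(\omega_k)$ is minuscule, $V(\omega_k)^*$ is spanned by extremal Pl\"ucker coordinates, the opposite Demazure modules are spanned by the extremal coordinates labelled by elements $\le \sigma$, and the conclusion is essentially formal. For a general simply-laced $G/P_k$, however, $V(\omega_k)^*$ has non-extremal weight spaces, and one must invoke the full representation-theoretic input---the compatibility of LS-paths with the Bruhat order and with the initial and final directions of each path---to ensure that standard monomials correctly record membership in $I(X^\sigma)$ via their smallest factor.

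An alternative route bypassing standard monomial theory would be to use the Frobenius splittings compatibly splitting all opposite Schubert varieties (Mehta--Ramanathan, Ramanan--Ramanathan) to deduce that $X^\tau \cup X^\nu$ is reduced and that its homogeneous coordinate ring admits a direct-sum decomposition into appropriate Demazure pieces. From such a decomposition, the degree-one generation follows directly by taking $B^{-}$-isotypic components, since the only $B^{-}$-stable complement of the coordinate ring of $X^\tau \cup X^\nu$ inside $\mathbb{C}[G/P_k]$ is the linear span $\ker(\operatorname{res}^\tau) \cap \ker(\operatorname{res}^\nu)$ together with everything it generates.
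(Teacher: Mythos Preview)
Your approach is correct and is essentially the paper's own proof: both invoke standard monomial theory (from \cite{lakshmibai2003richardson}) and reduce to the observation that a standard monomial lies in $I(X^{\sigma})$ precisely when a single extremal factor does, so that the ideal of the union is generated in degree one. One minor slip in your write-up: with the paper's convention for opposite Schubert varieties the vanishing condition on $p_w$ is $w \not\ge \sigma$ rather than $w \not\le \sigma$ (cf.\ \eqref{set}), and correspondingly the paper tracks the maximal rather than the smallest factor of the chain.
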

 
 \begin{proof}
 We refer the reader to \cite[Section 2, Section 5]{lakshmibai2003richardson} for the necessary background on standard monomials. Let $\tau, \tau' \in W/W_k$. By the proof \cite{lakshmibai2003richardson} of Theorem \ref{defining} we know that the kernel of the restriction map 
 \[\mathbb{C}[G/P_k] \longrightarrow \mathbb{C}[X^{\tau} \cup X^{\tau'}]\]

 \noindent has as basis the set of all standard monomials on $G/P_k$ that are neither standard on $X^{\tau}$ nor on $X^{\tau'}$. The indexing set of the factors of a standard monomial is linearly ordered. Now, a given standard monomial is standard on $X^{\tau} \cup X^{\tau'}$ if and only if its maximal factor is. Therefore,  standard monomial belongs to $I = I(X^{\tau} \cup X^{\tau'})$ if and only if the maximal factor of the monomial is in $I$. This implies that $I$ is generated by its degree one elements. 
 
 \end{proof}
 
 \begin{rmk}
 Theorems \ref{defining} and \ref{definingunions} hold more generally for Richardson varieties and unions thereof \cite{lakshmibai2003richardson}. In particular also for Schubert varieties. Also, the entire content of this section holds after replacing the fundamental weight $\omega_k$ with any dominant weight $\lambda$. We have, however, not included it as part of our notation since in this particular manuscript we only deal with fundamental weights and opposite Schubert varieties.
 \end{rmk}

\section{Proof of our main result}
 \subsection{Remarks on our notation}
 In the Introduction, we have introduced Pl\"ucker coordinates $p_{x_i}, p_{y_j}, p_{z_k}$ and $p_{u}$, labelled according to the pattern described by the graph $\mathcal{G}_{k}$ (see Figure \ref{graphgeny}). In terms of the notation introduced in the previous section, the equivalence is as follows. We have $k = x_{x_{c-2}}$ and each node of our graph $\mathcal{G}_{k}$ corresponds to an eleinment  $W/W_k$ as follows. If one walks along the graph starting at $p_{x_{c-2}}$ towards $p_{u}$, then the vertices correspond to the elements with representatives $id, s_{x_{c-2}}, s_{x_{c-1}}s_{x_{c-2}}, ..., s_{x_{1}}\cdots s_{x_{c-2}}$ and $s_{u}s_{x_{1}}\cdots s_{x_{c-2}}$. Then one walks along each arm and gets elements $s_{y_{1}}s_{u}s_{x_{1}}\cdots s_{x_{c-2}},..., s_{y_{d}}\cdots s_{y_{1}}s_{u}s_{x_{1}}\cdots s_{x_{c-2}}$ on the left arm, and $s_{z_{1}}s_{u}s_{x_{1}}\cdots s_{x_{c-2}},..., s_{z_{t}}\cdots s_{y_{1}}s_{u}s_{x_{1}}\cdots s_{x_{c-2}}$ on the right arm, respectively.

 To each of these elements $w$ corresponds a Pl\"ucker coordinate $p_w$, according to the notation introduced in this section. We will however abuse this notation and denote by $p_{l}$ the Pl\"ucker coordinate corresponding to one of the elements mentioned in the previous paragraph written as $v = s_{l} w$. This coincides with the notation introduced in Figure \ref{graphgeny}. Similarly, we will denote by $X^{l}$ the corresponding opposite Schubert varieties. We write $Y^l = X^l \cap C^{id}$. Since by definition there are no repetitions in the labelings of the edges of $\mathcal{G}_{k}$, this notation is well-defined.


\begin{proof} [Proof of Theorem \ref{ourthm}]
First we prove 2. and 3. assuming 1. Then we will show 1.  Now, since the ideals $I(X^{y_l})$, resp. $I(X^{z_m})$ are prime, their heights are respectively $l+c$ and $m+c$, that is, the codimensions of the varieties they define. Now, by construction, 
we have $c = \operatorname{ht}(I(X^{y_1})) = \operatorname{ht}(I(X^{z_1})) \leq l+c =  \operatorname{ht}(I(X^{y_l})) , m+c =  \operatorname{ht}(I(X^{z_m}))$. Now, let $I = I(X^{z_1})$ (resp. $ I = I(X^{y_1})$) and $J = I(X^{y_l}))$ (resp. $J = I(X^{z_m})$). Then, by Krull's altitude theorem, we know that:
\[\operatorname{ht}(J) \leq \mu (I \cap J),\]

\noindent
where for an ideal M, recall that $\mu(M)$ denotes the minimal number of generators of $M$. By Remark \ref{geometricri} it remains to show that equality holds, so we need to study the intersection $I \cap J$, that is, the defining ideal of $X^{z_1} \cup X^{y_l}$ (resp.  $X^{y_1} \cup X^{z_m})$). Now, by Theorem \ref{definingunions}, we know that $I( X^{z_1} \cup X^{y_l})$ (resp.  $I(X^{y_1} \cup X^{z_m}))$) is generated by the dual Pl\"ucker coordinates which vanish on both $X^{z_1}$ and $X^{y_l})$ (resp. on both $X^{y_1}$ and $X^{z_m})$).  Now, by Theorem \ref{definingunions} we know that $I(X^{z_1})$ (resp. $I(X^{y_1})$) is generated in degree 1 by the dual Pl\"ucker coordinates $p^{*}_{\emptyset},\ldots ,p^{*}_{u}, \ldots p^{*}_{y_1} \ldots p^{*}_{y_d}$ (resp. $p^{*}_{\emptyset},\ldots ,p^{*}_{u}, \ldots p^{*}_{z_1} \ldots p^{*}_{z_t}$). Out of these,  only $p^{*}_{\emptyset},\ldots ,p^{*}_{u}, \ldots p^{*}_{y_1} \ldots p^{*}_{y_{l-1}}$ vanish on $X^{y_l}$ (resp. only $p^{*}_{\emptyset},\ldots ,p^{*}_{u}, \ldots p^{*}_{z_1} \ldots p^{*}_{z_{m-1}}$ vanish on $X^{z_m}$). Therefore we conclude that the ideal $I(X^{y_l}))$ (resp. $I(X^{z_m})$) is a residual intersection of $I(X^{z_1})$ (resp. $I(X^{y_1})$). \\

Now we proceed to prove 1. Consider the crystal graph $B(\omega_{k})$.  The graph $\mathcal{G}_{k}$ is embedded (as a coloured, directed graph) into $B(\omega_{k})$ in such a way that $p_{\emptyset}$ corresponds to the lowest weight vertex of $B(\omega_{k})$ (of weight $-\omega_{k}$). The coordinates $p_{y_1}, p_{z_1}$ correspond to vertices in  $B(\omega_{k})$ with weights of the form $\sum a_{i} \omega_{i}$, where precisely one coefficient $a_{i}$ is positive: the one corresponding to $i = y_1$, respectively $i = z_1$. Let $\mathfrak{g}_{j}$ (for $j = y_1,z_1$) be the Levi subalgebra of $\mathfrak{g}$ corresponding to the sub-diagram $D_{j}$ of $D$ obtained by deleting the vertex corresponding to $j$. The coordinates $p_{y_1}, p_{z_1}$ each correspond to the lowest weight vertex of an irreducible $\mathfrak{g}_{j}$-crystal.  In fact the opposite Demazure module needed by Theorem \ref{defining} to obtain the defining equations for $X^{y_1}$, respectively $X^{z_1}$, is the $\mathfrak{b}$-module generated by $p_{y_1}$, respectively $p_{z_1}$, and this module coincides with the module generated by the Lie subalgebra of $\mathfrak{g}$ generated by $\mathfrak{b}$ together with $\mathfrak{g}_{j}$. If we consider the irreducible components of $\operatorname{res}^{\mathfrak{g}}_{\mathfrak{g}_{j}}(V(\omega_{k}))$, we have precisely one component whose dual vanishes on the opposite Demazure module $V({w_j}\omega_{k})^{\operatorname{opp}}$. Also, note that the corresponding component in the crystal graph corresponds precisely to the nodes in the arm (in  $\mathcal{G}_{k}$ embedded in $B(\omega_{k})$) opposite to $p_j$. This is due to the fact that the Levi sub-algebra $\mathfrak{g}_{j}$ is a product of two Lie algebras of type A, and the irreducible component at hand is therefore isomorphic to an irreducible $\mathfrak{sl}(r,\mathbb{C})$-crystal of highest weight either $\omega_{1}$ or $\omega_{r}$. Note that this does not happen if our initial node $k$ is not extremal (or minuscule). Therefore these are precisely the nodes whose corresponding dual coordinates vanish on $X^{j}$. This concludes the proof.

\end{proof}

\section{Examples and explicit computations in the minuscule cases}
In order to perform our explicit computations, we will, in this section, consider the intersections of our opposite Schubert varieties at hand with the big open cell, $C^{id}$. In the minuscule cases we are able to compute the defining equations of these intersections explicitly. We provide alternative proofs of Theorem \ref{ourthm}, using this information, after replacing opposite Schubert varieties with their respective intersections with the big open cell. In the exceptional cases, these are obtained using Macaulay2 and the material from \cite{FTW20}.

\subsection{Residual intersections in type $A_{n-1}$}\label{sec:typea}
In this subsection we focus on Schubert varieties in Grassmannians $\op{Gr}(k,n)$, for $1\leq k \leq n-1$. Without loss of generality let us also assume $k\le n-k$, otherwise we pass to the dual Grassmannian.

The set $W/W_{P_{k}}$ can be identified with the subsets $K$ of cardinality $k$ of $[1,n]$. To each such subset is associated an opposite Schubert variety denoted by $X^{K}$. The graph $\mathcal{G}_{k}$ has the left arm consisting of the varieties $X^{\{1,2,\ldots, k-1, k+s\}}$ for $s=0,\ldots, n-k$; the right arm consists of varieties $X^{[1,k+1]\setminus \lbrace k+1-s\rbrace }$ for $s=0,1,\ldots, k$.

The big open cell can be identified with the set of matrices

\[ M = \left(\begin{matrix} y_{1,1}& y_{1,2}&\ldots &y_{1,n-k}&1&0&\ldots &0\\
y_{2,1}&y_{2,2}&\ldots&y_{2,n-k}&0&1&\ldots&0\\
\ldots&\ldots&\ldots&\ldots&\ldots&\ldots&\ldots&\ldots&\\
y_{k,1}&y_{2,2}&\ldots&y_{k,n-k}&0&0&\ldots&1
\end{matrix}\right).\]

\noindent
The ideal \[I(1,2,\ldots, k-1, k+s) := I(Y^{\{1,2,\ldots, k-1, k+s\}})\] \noindent can be identified with the ideal of maximal minors
of the $(k-s+1)\times k$ submatrix of the matrix $M$ consisting of last $k-s+1$ rows. The ideal \[I([1,k+1]\setminus\lbrace k+1-s\rbrace ):= I(Y^{[1,k+1]\setminus \lbrace k+1-s\rbrace })\] \noindent is the 
ideal of maximal minors of the first $k-s+1$ columns of $M$. The equations equivalent to Theorem \ref{ourthm} are the following:
\begin{align*}
(p_{1,2,\ldots ,k},\ldots ,p_{1,2,\ldots ,k-1, k+s}):I(1,2,\ldots ,k-2, k, k+1)=I(1,2,\ldots ,k-1,k+s+1)\\
(p_{[1,k+1]\setminus\lbrace k+1\rbrace},\ldots ,p_{[1,k+1]\setminus \lbrace k+1-s\rbrace}: I(1,2,\ldots,k-1,k+2)=I([1,k+1]\setminus \lbrace k+1-s-1\rbrace ).
\end{align*}
The second set of equations was proved by Huneke \cite{H83}. To prove the second set of equations we first show that
\[(p_{[1,k+1]\setminus\lbrace k+1\rbrace},\ldots ,p_{[1,k+1]\setminus \lbrace k+1-s\rbrace}: I(1,2,\ldots,k-1,k+2)\supset I([1,k+1]\setminus \lbrace k+1-s-1\rbrace ).\]
This is equivalent to saying that
\[I(1,2,\ldots,k-1,k+2) I([1,k+1]\setminus \lbrace k+1-s-1\rbrace )\subset (p_{[1,k+1]\setminus\lbrace k+1\rbrace},\ldots ,p_{[1,k+1]\setminus \lbrace k+1-s\rbrace}\]
\noindent This means that
\[p_Ip_J\in \subset (p_{[1,k+1]\setminus\lbrace k+1\rbrace},\ldots ,p_{[1,k+1]\setminus \lbrace k+1-s\rbrace}\]
for $p_I\in  I(1,2,\ldots,k-1,k+2)$, $p_J\in  I([1,k+1]\setminus \lbrace k+1-s-1\rbrace )$.
The dual Pl\"ucker coordinate $p_I$ is a $k\times k$ minor on all rows of $Y$ and the first $k+1$ columns.
The dual Pl\"ucker coordinate $p_J$ is a $(k-s+1)\times (k-s+1)$ minor on the last $k-s+1$ rows of $Y$. The other implication follows from an argument of Huneke \cite{H83}.

\begin{example}

Consider the case $k = 2$. Proposition \ref{g2n} describes the pattern in this case. In Figure \ref{26}, we see the graph $\mathcal{G}_{2}$ embedded into the Bruhat graph for $W/W_{P_{2}}$ which coincides with the crystal graph for the minuscule $\mathfrak{sl}(6,\mathbb{C})$ representation $V(\omega_2) = \bigwedge^{2}\mathbb{C}^{6}$. The graph $\mathcal{G}_{2}$ is the subgraph highlighted in bold red spanned by the vertices $p_{12} = p^{*}_{\emptyset}, p_{13} = p^{*}_{2}, p_{23} = p^{*}_{1}, p_{14} = p^{*}_{3}, p_{15} = p^{*}_{4}, p_{16} = p^{*}_{5}$, where our identification is intended to match the notation used to formulate Theorem \ref{ourthm}.

\begin{prp}
\label{g2n}
Let $j< n$ and 
\begin{align*}
I &= (p_{in}: 1 \leq i < n) \\
K_{j} &=  (p_{in}: j \leq i < n) \\
I_{j} &=  (p_{st}: j \leq s < t \leq n).
\end{align*}
\noindent Then $K_{j}: I = I_{j}$.
\end{prp}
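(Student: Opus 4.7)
The strategy follows the proof of Theorem \ref{ourthm}: identify the varieties cut out by $I$ and $I_j$ as prime Schubert (hence Richardson) varieties in $\op{Gr}(2,n)$, compute the defining ideal of their union via Theorem \ref{definingunions} (extended as in the remark after it), and conclude with Remark \ref{geometricri}.

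First I would give the geometric descriptions. The variety $V(I)$ is the sub-Grassmannian $\{V \in \op{Gr}(2,n) : V \subseteq \langle e_1,\ldots,e_{n-1}\rangle\} \cong \op{Gr}(2,n-1)$, an irreducible Schubert variety of codimension $2$. The variety $V(I_j)$ is $\{V : V \cap \langle e_1,\ldots,e_{j-1}\rangle \neq 0\}$; indeed, the vanishing of every $p_{st}$ with $s,t \geq j$ forces the projection $V \to \langle e_j,\ldots,e_n\rangle$ to have rank at most one, equivalent to $V$ meeting the complementary subspace $\langle e_1,\ldots,e_{j-1}\rangle$. This is an irreducible Schubert variety of codimension $n-j$. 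Irreducibility gives primality of both $I$ and $I_j$, and $p_{1n} \in I \setminus I_j$ shows $I \not\subseteq I_j$ for $j \geq 2$.

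Next I would compute $I \cap I_j$. The Pl\"ucker coordinates vanishing on $V(I)$ are exactly $\{p_{in} : 1 \leq i < n\}$ and those vanishing on $V(I_j)$ are exactly $\{p_{st} : j \leq s < t \leq n\}$; the reverse inclusions are checked by exhibiting test points $V = \langle e_s, e_t\rangle$ in each variety where any excluded Pl\"ucker coordinate is nonzero. By Theorem \ref{definingunions}, the ideal $I \cap I_j$ is generated by the Pl\"ucker coordinates vanishing on both varieties, and this intersection of vanishing sets is exactly $\{p_{in} : j \leq i < n\}$. Hence $I \cap I_j = K_j$ and $\mu(K_j) = n - j = \mathrm{ht}(I_j)$.

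For $2 \leq j \leq n - 2$, Remark \ref{geometricri} applied to the primes $I, I_j$ with $\mathrm{ht}(I) = 2 \leq n - j = \mathrm{ht}(I_j)$ gives $I_j = (I \cap I_j) : I = K_j : I$, exhibiting $I_j$ as an $(n-j)$-residual intersection of $I$. The boundary case $j = n - 1$ is handled directly: both $K_{n-1}$ and $I_{n-1}$ equal the principal prime $(p_{n-1,n})$, defining an irreducible Schubert divisor, and primality together with $p_{1n} \notin (p_{n-1,n})$ yields $K_{n-1} : I = (p_{n-1,n}) = I_{n-1}$.

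The principal obstacle is the second step: pinning down precisely which Pl\"ucker coordinates vanish on each of $V(I)$ and $V(I_j)$. The ``no others vanish'' halves, while elementary, must be made explicit for Theorem \ref{definingunions} to deliver the desired equality $I \cap I_j = K_j$.
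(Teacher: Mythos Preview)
Your argument is correct, but it takes a genuinely different route from the paper's own proof. The paper gives a two-line direct computation: the Pl\"ucker relation $p_{st}p_{in} = \pm p_{si}p_{tn} \mp p_{ti}p_{sn}$ shows immediately that $p_{st}\,I \subseteq K_j$ whenever $j \le s < t$ (since both $p_{sn}$ and $p_{tn}$ lie in $K_j$), giving $I_j \subseteq K_j : I$; the reverse inclusion is then argued from the same relation. You instead specialize the global machinery of Theorem~\ref{ourthm}: identify $V(I)$ and $V(I_j)$ as Schubert varieties, use Theorem~\ref{definingunions} to compute $I \cap I_j = K_j$, and finish with Remark~\ref{geometricri}. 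The paper's approach is more elementary and self-contained (it needs nothing beyond a single quadratic relation), whereas yours is more structural, making the proposition visibly a special case of the main theorem and explaining \emph{why} the colon ideal turns out to be a Schubert ideal rather than merely verifying the identity. One small point: your case split $2 \le j \le n-2$ versus $j = n-1$ is needed only because Remark~\ref{geometricri} asks for $\mathrm{ht}(I) \le \mathrm{ht}(I_j)$; the identity $(I \cap I_j):I = I_j$ for $I$, $I_j$ prime with $I \not\subseteq I_j$ holds without that height hypothesis, so once you have $I \cap I_j = K_j$ the conclusion follows uniformly for all $j \ge 2$.
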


\begin{proof}
The Plücker relation $p_{st}p_{in} = p_{si}p_{tn} - p_{sn}p_{tn}$ implies $K_{j}: I \supseteq I_{j}$. The same relation implies that a given dual Plücker coordinate $p_{st}$ belongs to the colon ideal $K_{j}:I$ if and only if $p_{st} \in I_{j}$.
\end{proof}

\begin{figure}
\includegraphics[scale= 0.9]{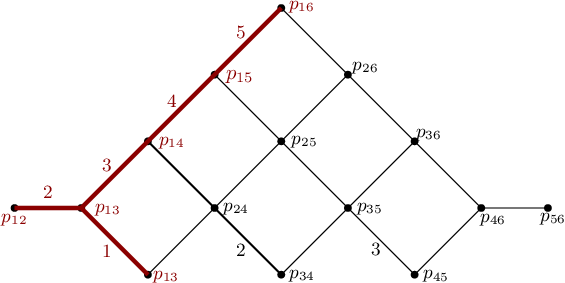}
\caption{Bruhat graph for $W/W_{P_{2}}$ for $A_{5}$}
\label{26}
\end{figure}

\end{example}

\subsection{Residual intersections of Pfaffian ideals and Schubert Varieties of type $D_{m}$}\label{sec:typed}
\label{minusculeD}

\subsubsection{The half-spin representation $V(\omega_{n})$}
Let  $P_{n} \subset \op{SO}(2n,\mathbb{C})$ be a parabolic subgroup which stabilizes the minuscule fundamental weight $\omega_{n}$, using Bourbaki notation to label our fundamental weights. It is known  that $ \op{SO}(2n,\mathbb{C})/P_{n}$ is naturally embedded into $\mathbb{P}(V(\omega_{n}))$, where $V(\omega_{n})$ is one of the half-spin representations. It is also isomorphic to one of two connected components of the \textit{orthogonal} or \textit{isotropic} Grassmannian $\op{IG}(2n,\mathbb{C})$ of maximal subspaces of an even-dimensional vector space which are isotropic with respect to a non-degenerate, symmetric bilinear form. Since the fundamental weight $\omega_{n}$ is minuscule, the Bruhat graph for the quotient $W/W_{P_{n}}$ coincides with the crystal graph associated to the half-spin representation $V(\omega_{n})$.

The crystal $B(\omega_{n})$ for the half-spin representation $V(\omega_n)$ is defined as follows (we refer the reader to \cite{KashiwaraNakashima1994} for a full treatment of this topic for classical Lie algebras).
Its elements are parametrized by sequences $\underline{i} = (i_{1},...,i_{n}), i_{j} = \pm $ of plus and minus signs such that their total product equals one. Moreover, 

\[
e_{j}(i_{1},...,i_{n})= 
\begin{cases}
(i'_{1},...,i'_{n}), i'_{j} = +, i'_{j+1} = -, i'_{k} = i_{k} \quad \forall  k \neq j \hbox{ if } i_{j} = - \hbox{ and } i_{j+1} = + \\
0 \hbox{ otherwise }
\end{cases}
\]

\[
f_{j}(i_{1},...,i_{n})= 
\begin{cases}
(i'_{1},...,i'_{n}) , i'_{j} = -, i'_{j+1} = +, i'_{k} = i_{k} \quad \forall k \neq j\hbox{ if } i_{j} = + \hbox{ and } i_{j+1} = - \\
0 \hbox{ otherwise }
\end{cases}
\]

\noindent
To each sequence $\underline{i}$ we can associate a minor of a generic $2n\times 2n$ skew-symmetric matrix by choosing the rows and columns corresponding to the indices of the minus signs in $\underline{i}$. The Pfaffians of these naturally index a basis of the half-spin representation $V(\omega_{n})$. In Figure \ref{d5} the reader can see the crystal graph for $B(\omega_{5})$ and the graph $\mathcal{G}_{5}$ embedded in it (in bold red).

\begin{figure}
\includegraphics[scale= 0.9]{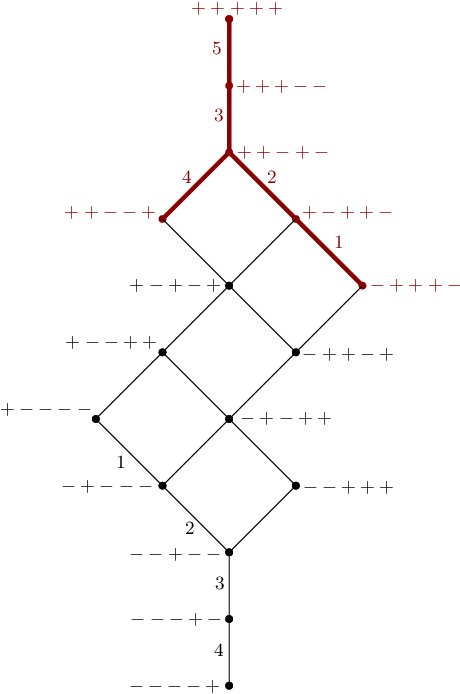}
\caption{Crystal graph for the half-spin representation $V(\omega_{5})$ of $\op{SO}(10,\mathbb{C})$}
\label{d5}
\end{figure}

\subsubsection{ The isotropic Grassmannian and its Schubert varieties}

Let $V$ be an even-dimensional complex vector space of dimension $2m$, and let 
\[\< -,-\rr: V \times V \rightarrow \mathbb{C}\]
be a symmetric, non-degenerate bilinear form on $V$. We say that a subspace $W \subset V$ is \textit{isotropic}  if $\< v,w\rr = 0$ for any $v,w \in W$. By Witt's theorem \cite{witt}, any such subspace can have dimension at most $n$. The subvariety $IG(m,2m) \subset \op{Gr}(m,2m)$ of $m$-dimensional isotropic subspaces of $V$ is called the \textit{isotropic Grassmannian} and has two connected components $S^{+}$ and $S^{-}$, isomorphic to $\op{SO}(2m,\mathbb{C})/P_{\omega_{m}}$ and  $\op{SO}(2m,\mathbb{C})/P_{\omega_{m-1}}$, respectively; these in turn can be canonically embedded into $\mathbb{P}(V(\omega_{m}))$ and $\mathbb{P}(V(\omega_{m-1}))$, respectively. Each point in the opposite big open subset of $S^{+}$ is generated by the rows of a matrix of the form $(I_{m} | X)$, where $I_{m}$ is the $m\times m$ identity matrix and $X$ is a skew-symmetric matrix. (Opposite) Schubert cells are intersections of the (opposite) Schubert cells in $\op{Gr}(m,2m)$ with $\op{SO}(2m,\mathbb{C})/P_{\omega_{m}}$ and hence are indexed by certain partitions. Since the crystal graph coincides with the Bruhat graph in our minuscule case we may associate a (an opposite) Schubert cell to each vertex $\underline{i} = (i_{1},...,i_{m})$ in $B(\omega_{m})$. Let $k_{1} < \cdots < k_{s}$ be the indices such that $i_{k_{t}}$ is a minus sign for $1\leq t \leq s$, and let $j_{1} < \cdots < j_{u}$ be the indices such that $i_{j_{t}}$ is a plus sign for $1 \leq t \leq u$. Recall that (opposite) Schubert cells and therefore (opposite) Schubert varieties in $\op{Gr}(m,2m)$ are labelled by sequences of indices $\underline{l} = 1 \leq l_{1} < \cdots < l_{n} \leq 2m$.

%

 The plus minus sequence $\underline{i}$ defines the index sequence

\[\underline{l}(\underline{i}) =  j_{1},\cdots, j_{u}, 2m - k_{s} +1, \cdots 2m - i_{1} - 1.\]

The following result is well-known.

\begin{thm}\cite{brownlakshmibai}
Let $X^{\underline{i}} \subset \op{SO}(2n,\mathbb{C})/P_{n}$ be the opposite Schubert variety associated to $\underline{i}$ and let $\Sigma^{\underline{l}(\underline{i})} \subset \op{Gr}(n,2n)$ be the opposite Schubert variety in the Grassmannian corresponding to the index sequence $\underline{l}(\underline{i})$. Then 

\[ \Sigma^{\underline{l}(\underline{i})} \cap \op{SO}(2n,\mathbb{C})/P_{n} = X^{\underline{i}}. \]

Moreover, all Schubert varieties in $\op{SO}(2n,\mathbb{C})/P_{n}$ arise in this way.
\end{thm}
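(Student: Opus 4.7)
The strategy is to exploit the compatibility of the Bruhat decomposition with the natural embedding $\iota\colon \op{SO}(2n,\mathbb{C})/P_{n} \hookrightarrow \op{Gr}(n,2n)$ induced by $\op{SO}(2n,\mathbb{C}) \subset \op{GL}(2n,\mathbb{C})$, together with an explicit identification of torus-fixed points. First I would fix compatible tori and opposite Borels: take $T \subset \op{SO}(2n,\mathbb{C})$ contained in the standard diagonal torus of $\op{GL}(2n,\mathbb{C})$, and choose an opposite Borel $B^{-}_{D} \subset \op{SO}(2n,\mathbb{C})$ that is obtained by intersecting $\op{SO}(2n,\mathbb{C})$ with the lower-triangular Borel $B^{-}_{A} \subset \op{GL}(2n,\mathbb{C})$. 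With these choices, every $B^{-}_{D}$-orbit on $\op{SO}(2n,\mathbb{C})/P_{n}$ is contained in a single $B^{-}_{A}$-orbit on $\op{Gr}(n,2n)$.

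Second, I would match the $T$-fixed points on both sides. The $T$-fixed points of $\op{SO}(2n,\mathbb{C})/P_{n}$ are indexed by cosets in $W(D_{n})/W(D_{n})_{n}$, which (as described just before the theorem) correspond bijectively to plus-minus sequences $\underline{i}$ of length $n$ with product $+1$; the $T$-fixed points of $\op{Gr}(n,2n)$ are indexed by $n$-subsets $\underline{l} \subset [1,2n]$. Using the presentation of the opposite big open cell of the spinor variety as matrices $(I_{n}\mid X)$ with $X$ skew-symmetric, one computes that the Pfaffian indexed by the minus positions of $\underline{i}$ is, up to sign, the $n\times n$ Pl\"ucker minor of $(I_{n}\mid X)$ whose column set is precisely $\underline{l}(\underline{i})$. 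This is the statement that pins down $\iota$ on $T$-fixed points.

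Third, combining the two previous points, the Schubert cell $C^{\underline{i}} = B^{-}_{D}\cdot w_{\underline{i}} P_{n}$ is forced into a single $B^{-}_{A}$-cell of $\op{Gr}(n,2n)$, namely the one containing the matching fixed point, so $C^{\underline{i}} \subseteq \Sigma^{\underline{l}(\underline{i})} \cap \op{SO}(2n,\mathbb{C})/P_{n}$. Equality of cells then follows because both the $C^{\underline{i}}$'s and the intersections $\Sigma^{\underline{l}(\underline{i})} \cap \op{SO}(2n,\mathbb{C})/P_{n}$ stratify the spinor variety, the $\underline{l}(\underline{i})$ being pairwise distinct. Passing to Zariski closures yields $X^{\underline{i}} = \Sigma^{\underline{l}(\underline{i})} \cap \op{SO}(2n,\mathbb{C})/P_{n}$. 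The moreover statement is then automatic: $\underline{i}\mapsto \underline{l}(\underline{i})$ is injective, and the source already enumerates every Schubert variety of $\op{SO}(2n,\mathbb{C})/P_{n}$.

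The main obstacle is the combinatorial verification in the second step: nailing down the sign and index conventions so that the map $\underline{i}\mapsto \underline{l}(\underline{i})$ spelled out in the statement really is the one induced by $\iota$ at the level of torus-fixed points. Once this identification is in hand, the remainder of the proof is an orbit-theoretic deduction followed by a closure step.
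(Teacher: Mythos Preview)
The paper does not give its own proof of this theorem: it is introduced with ``The following result is well-known'' and stated with a citation to \cite{brownlakshmibai}, with no argument following. So there is no proof in the paper to compare your outline against; the authors simply import the statement from Brown--Lakshmibai.

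Your sketch is a reasonable version of the standard argument, but one step is underspecified. In your third paragraph you write ``Passing to Zariski closures yields $X^{\underline{i}} = \Sigma^{\underline{l}(\underline{i})} \cap \op{SO}(2n,\mathbb{C})/P_{n}$.'' From the cell-level equality you only immediately obtain the inclusion $X^{\underline{i}} \subseteq \Sigma^{\underline{l}(\underline{i})} \cap \op{SO}(2n,\mathbb{C})/P_{n}$, since the right-hand side is closed and contains $C^{\underline{i}}$. For the reverse inclusion, note that $\Sigma^{\underline{l}(\underline{i})}$ is the union of the Grassmannian cells indexed by $\underline{l}' \le \underline{l}(\underline{i})$, so its intersection with the spinor variety is $\bigcup_{\underline{l}(\underline{i}')\le \underline{l}(\underline{i})} C^{\underline{i}'}$; to conclude that this equals $X^{\underline{i}} = \bigcup_{\underline{i}'\le \underline{i}} C^{\underline{i}'}$ you need that $\underline{i}\mapsto \underline{l}(\underline{i})$ is an order embedding for the two Bruhat orders, not just an injection on torus-fixed points. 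That compatibility is true and is part of what is checked in \cite{brownlakshmibai}, but it is an additional combinatorial verification beyond your step two, and your phrase ``stratify the spinor variety'' does not supply it (closed Schubert varieties are nested, not disjoint, so they do not form a stratification).
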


Let $\underline{i} = (i_{1}, \cdots, i_{m})$ be a vertex in $B(\omega_{m})$ as described above, that is, it is a size $n$ sequence of plus and minus signs whose product is positive. In this section we will describe how to obtain equations for the intersection $Y^{\underline{i}}$. Recall our notation from above and consider the pfaffians $Pf(k_{1}\cdots k_{s})$ of the minors of an $m \times m$ generic skew-symmetric matrix $A$ defined by picking all the rows and columns indexed by $k_{1},\cdots, k_{s}$. These are polynomials in the variables $\left\{ x_{ij}\right\}_{1 \leq i < j \leq m}$. We will denote by ${p_f}_{\underline{i}}$ the pfaffian $Pf(k_{1}\cdots k_{s})$. Recall that since we have identified the Bruhat graph with the crystal graph we may write $\underline{i } \leq \underline{i'}$ whenever the order relation is preserved for minimal length representatives of $\underline{i}$ and $\underline{i'}$. 

\begin{prp}
\label{equations}
The defining ideal $I^{\underline{i}}$ of the intersection $Y^{\underline{i}}$ is generated by the pfaffians ${p_f}_{\underline{i'}}$ for $\underline{i'} \nleq  \underline{i}$.
\end{prp}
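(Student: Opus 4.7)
The plan is to deduce the statement from Theorem \ref{defining} by restricting to the opposite big open cell $C^{id}$. Because $\omega_m$ is minuscule, the extremal Plücker coordinates already span $V(\omega_m)^*$, so Theorem \ref{defining} tells us that the homogeneous ideal of $X^{\underline{i}}$ inside $\mathbb{P}(V(\omega_m))$ is generated by the quadratic relations cutting out $\op{SO}(2m,\mathbb{C})/P_m$ together with the extremal coordinates $p_{\underline{i'}}^*$ that lie in $\ker(\op{res}^{\underline{i}})$. Since, in the minuscule case, the Bruhat order on $W/W_{P_m}$ is identified with the crystal order on $B(\omega_m)$, those coordinates are precisely the $p_{\underline{i'}}^*$ with $\underline{i'} \nleq \underline{i}$ in the ordering of Section \ref{minusculeD}.

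Next I would identify the coordinate ring of $C^{id}$ with the polynomial ring $\mathbb{C}[x_{ij}]_{1 \le i < j \le m}$ via the standard parametrization of $C^{id}$ by row spans of matrices $(I_m \mid X)$ with $X$ a generic skew-symmetric $m \times m$ matrix. Under this parametrization, the classical pure-spinor/Chevalley realisation of the half-spin representation shows that each restricted Plücker coordinate $p_{\underline{i'}}^*\big|_{C^{id}}$ equals, up to sign, the pfaffian ${p_f}_{\underline{i'}} = \Pf(k_1 \cdots k_s)$ on the submatrix of $X$ indexed by the minus positions $k_1 < \cdots < k_s$ of $\underline{i'}$; in particular, the all-plus sequence produces the empty pfaffian $1$, corresponding to the usual chart normalization.

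To finish, I would observe that the quadratic generators defining $\op{SO}(2m,\mathbb{C})/P_m \hookrightarrow \mathbb{P}(V(\omega_m))$ restrict identically to zero on $C^{id}$: these quadrics encode the maximal isotropy condition, which is automatic for subspaces of the form $(I_m \mid X)$ with $X$ skew-symmetric. Consequently, only the linear generators of $I(X^{\underline{i}})$ survive on the affine chart, and they restrict to the pfaffians $\{{p_f}_{\underline{i'}} : \underline{i'} \nleq \underline{i}\}$, giving exactly the claimed set of generators of $I^{\underline{i}}$.

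The main technical step is the identification of the restricted extremal Plücker coordinates with the correct pfaffians of submatrices of $X$. While this is classical (the half-spin representation is the Chevalley module of pure spinors, and its weight basis is indexed by even subsets with pfaffian-valued coordinates on the big cell of $\op{IG}(m, 2m)$), carefully aligning the crystal-graph labeling of Section \ref{minusculeD} (plus-minus sequences with an even number of minuses) with the indexing of pfaffian minors of $X$, and keeping track of signs, is the point requiring the most bookkeeping; once this identification is established, the proposition follows immediately.
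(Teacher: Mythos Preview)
The paper states Proposition~\ref{equations} without proof, so there is no argument in the paper to compare against. Your proposal is a correct and natural way to supply one: invoke Theorem~\ref{defining} in the minuscule situation, so that the homogeneous ideal of $X^{\underline{i}}$ is generated by the quadrics of $G/P_m$ together with the extremal coordinates $p_{\underline{i'}}^*$ for $\underline{i'}\nleq\underline{i}$; then pass to the affine chart $C^{id}$, where the quadrics vanish identically and the extremal coordinates restrict to the sub-pfaffians of the generic skew matrix $X$ parametrizing the chart $(I_m\mid X)$. This is presumably exactly what the authors have in mind, since they explicitly record just above the proposition that the pfaffians of the minus-indexed minors ``naturally index a basis of the half-spin representation $V(\omega_m)$'', which is precisely the identification you flag as the one technical ingredient. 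Your caveat about aligning the crystal labeling with the pfaffian indexing and tracking signs is fair, but none of that affects the ideal-theoretic statement, so the argument goes through as you outline.
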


\subsubsection{Residual intersections of pfaffian ideals}

\begin{thm}
Let $A$ be a generic $(2n+1)\times (2n+1)$ skew-symmetric matrix of variables, let $R = \mathbb{C}[A]$ and let $I \subset R$ be the ideal generated by the sub-maximal pfaffians of $A$. Let $3 \leq j \leq m$ and $X_{j}$ be the top left-aligned $m-j \times m-j$ minor of $A$. Let $Pf_{j}(A)$ be the ideal generated by the pfaffians of all the minors of $A$ containing $A_{j}$. Then:

\begin{align}
\label{oddcolond}
I \left( \left\{ {p_f}_{[1,2n+1]/ \left\{ t \right\} } \right\}_{t = j}^{2n+1} \right) : I = Pf_{j}(A)
\end{align}
\quad

\noindent Moreover, the ideal $Pf_{j}(A)$ is the defining ideal of an open subset of the Schubert variety in $\op{SO}(2m)/ \omega_{m}$, where $m = 2n+1$, corresponding to the node indexed by the $\pm$ sequence consisting of one single $+$ in place $ m-j$, and the rest minuses. Additionally, Let $A'$ be the matrix obtained from $A$ by setting the south-eastern aligned $j\times j$ to zero. Then we have $Pf_{j}(A) = Pf_{j}(A')$.

\end{thm}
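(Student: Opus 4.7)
The plan is to recognise this statement as the concrete $D_m$ half-spin incarnation of Theorem~\ref{ourthm}, translated into pfaffian language via the dictionary of Section~\ref{sec:typed}. I would first fix the set-up: for $m = 2n+1$ take $k = m$ as the extremal minuscule vertex of the $D_m$ Dynkin diagram, so $\mathcal{G}_m$ has its ``long arm'' $p_\emptyset, p_{x_0}, p_u, p_{y_1}, \ldots, p_{y_{m-3}}$ together with a short $z$-arm consisting of the single vertex $p_{z_1}$. Under the dictionary of Section~\ref{sec:typed} (a sign sequence $\underline{i}$ with minus signs at positions $K \subseteq [1,m]$ corresponds to the pfaffian $\Pf(K)$), the $m$ sub-maximal pfaffians $\Pf([1,m]\setminus\{t\})$ are exactly the one-plus sequences; a direct crystal inspection shows that they form the chain $p_\emptyset \to p_{x_0} \to p_u \to p_{y_1} \to \cdots \to p_{y_{m-3}}$ with $t = m$ at the bottom and $t = 1$ at the top. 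In particular the sub-maximal pfaffian ideal $I$ is identified with $I(Y^{z_1})$, the defining ideal of the opposite Schubert variety at the base of the $z$-arm.

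With this dictionary in hand, Proposition~\ref{equations} applied to the sequence $\underline{i}_j$ with its unique $+$ at position $m-j$ expresses $I(Y^{\underline{i}_j})$ as the ideal generated by the pfaffians ${p_f}_{\underline{i'}}$ with $\underline{i'} \not\le \underline{i}_j$; a direct check in the crystal order rewrites this generating set as exactly the pfaffians of minors of $A$ containing $A_j$, yielding the Schubert-variety identification of $\Pf_j(A)$ stated in the ``moreover'' clause. For the residual intersection formula, the generators $\Pf([1,m]\setminus\{t\})$ for $t = j, \ldots, m$ correspond, under the dictionary, to exactly the extremal Pl\"ucker coordinates along the long arm strictly below $p_{\underline{i}_j}$ (equivalently, $p_\emptyset$ through $p_{y_{l-1}}$ in the notation of Theorem~\ref{ourthm}), so Theorem~\ref{ourthm}(2) applied to the Ulrich pair $(X^{y_1}, X^{z_1})$ delivers the colon identity verbatim.

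For the equality $\Pf_j(A) = \Pf_j(A')$, I would expand each generator $\Pf(A_I)$, with $I = [1, m-j] \cup J$ and $J \subseteq [m-j+1, m]$, as a signed sum over perfect matchings of $I$. Matchings containing no pair entirely inside $J$ assemble into $\Pf(A'_I)$; any matching containing some pair $\{a, b\} \subseteq J$ contributes $\pm a_{ab} \, \Pf(A_{I \setminus \{a,b\}})$, and since $I \setminus \{a,b\} \supseteq [1, m-j]$, this smaller pfaffian already lies in $\Pf_j(A) \cap \Pf_j(A')$. Iterating gives $\Pf(A_I) \equiv \Pf(A'_I) \pmod{\Pf_j(A) \cap \Pf_j(A')}$, from which both inclusions between $\Pf_j(A)$ and $\Pf_j(A')$ follow at the level of generators.

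The main obstacle I expect is the dictionary in the first step when $m = 2n+1$ is odd: the lowest-weight vertex of $B(\omega_m)$ is then not the all-minus sequence but $(-, -, \ldots, -, +)$, and one has to invoke the type-$D$ crystal arrows (including the ``branch'' operator $e_m$ acting on the last two coordinates) to check that the chain of one-plus sequences indeed fills the entire long arm of $\mathcal{G}_m$. Once this identification is fixed, the colon identity and the Schubert-variety statement reduce to mechanical applications of Theorem~\ref{ourthm} and Proposition~\ref{equations}, while the equality $\Pf_j(A) = \Pf_j(A')$ is the elementary pfaffian-expansion argument above.
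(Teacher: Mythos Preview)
Your approach is correct but follows a genuinely different route from the paper's. The paper does \emph{not} invoke Theorem~\ref{ourthm} here; instead it appeals directly to the Kustin--Ulrich black box \cite[Theorem~10.2]{KU}. Concretely, the paper writes the chosen generators $p_{f_{[1,m]\setminus\{t\}}}$, $j\le t\le m$, as $B^t \cdot (p_{f_{[1,m]\setminus\{1\}}},\ldots,p_{f_{[1,m]\setminus\{m\}}})^t$ for the explicit matrix $B = \binom{0_{(m-j)\times j}}{\mathrm{Id}_j}$, forms the augmented skew matrix $T = \left(\begin{smallmatrix} A & B \\ -B^t & 0\end{smallmatrix}\right)$, and uses the Kustin--Ulrich theorem to identify the colon ideal with the ideal generated by the pfaffians of even minors of $T$ containing $A$. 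A direct computation with this specific $B$ shows those KU pfaffians are exactly the pfaffians of minors of $A$ containing $A_j$; and since the lower-right block of $T$ is already zero, the same computation simultaneously delivers $Pf_j(A)=Pf_j(A')$. The Schubert identification via Proposition~\ref{equations} is the one step you and the paper share.

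What each approach buys: the paper's argument is an \emph{independent} explicit verification, which is the stated purpose of this section (giving alternative proofs of Theorem~\ref{ourthm} in the minuscule cases), and it gets the equality $Pf_j(A)=Pf_j(A')$ for free from the shape of $T$. Your argument is more internal and economical---it needs no external input beyond the paper's own Theorem~\ref{ourthm} and a short pfaffian-expansion lemma---but it is logically downstream of Theorem~\ref{ourthm} rather than corroborating it, and you must supply the $Pf_j(A)=Pf_j(A')$ step separately (your matching argument works, provided you organise it as an induction on $|I\cap[m-j+1,m]|$ to avoid overcounting matchings using several edges inside the bottom-right block).
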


\begin{proof}
Note that by Proposition \ref{equations}, the ideal $Pf_{j}(A)$ describes an open subset of the desired Schubert variety. This follows from the crystal graph description of the half-spin representations using $\pm $ sequences (recall that in our set-up a - in the sequence means the column is not omitted) We proceed to describe the Kustin--Ulrich generators associated to $A$ and $j$ described in \cite[Definition 4.10]{KU}. For $t \in [1,m]$, let $B$ be the $m \times j$ matrix with columns of the form $(a_{t1},..., a_{tm})$, where 
\
\[ p_{[1,m]/ \left\{ t\right\} } = \sum_{i = 1}^{m} a_{ti} \hbox{ }p_{[1,m]/ \left\{ i\right\} }  \]

\noindent for $j \leq t \leq m$. That is, the matrix $B$ is of the form $\frac{Z_{m-j \times j}}{Id_{j}}$, where $Z_{m-j \times j}$ is the zero matrix of size $m-j \times j$, and $Id_{j}$ is the $j \times j$ identity matrix. Theorem 10.2 in \cite{KU} states that if one considers the matrix  

\
\[ T = \begin{pmatrix} A & B \\ -B^{t} & Z_{j \times j} \end{pmatrix}, \]

\noindent
then the pfaffians of the even-sized minors of $T$ containing $A$ generate the residual intersection ideal \ \[I \left( \left\{ {p_f}_{[1,2n+1]/ \left\{ t \right\} } \right\}_{t = j}^{2n+1} \right) : I  \subset R.\]
However, it is easy to check by direct computation that these pfaffians are precisely the pfaffians of the minors of $A$ which contain the top left-aligned $j \times j$ minor  $A_{j}$  of $A$, and these pfaffians are also the same as the pfaffians of $A'$ which contain the top-left aligned minor $A'_j = A_j$.  
\end{proof}

\subsubsection{Finite free resolutions}

We deal with the case $c=3$, $d=n-3$, $t=1$. We are interested in the varieties $Y^{y_1}$, $Y^{y_2}$, \ldots , $Y^{y_d}$. The finite free resolutions of these varieties were described by Kustin and Ulrich  in \cite{KU}. They treated them as generic residual intersections of the ideal of codimension $3$ Pfaffians of an $(2m+1)\times (2m+1)$  skew-symmetric matrix. But by our main result these are exactly Schubert varieties $Y^{y_1}$, \ldots , $Y^{y_d}$. Reading the paper \cite{KU} is not easy because of many technical details. We provide a more geometric approach. It does not give differentials in our free resolutions, but it gives a description of Betti tables of our Schubert varieties.
As an example we give the resolutions of complexes constructed by Kustin and Ulrich in \cite{KU}.

 Let $U$, $V$ be  vector spaces of dimension $u$, $v$ respectively.

We use the geometric method of calculating syzygies \cite{JWm03}. We work over the polynomial ring $A=Sym(\bigwedge^2 V^*\oplus U\otimes V^*)$.

We think of the spectrum of $A$ as a space of skew-symmetric $(v+u)\times (v+u)$ matrices

$$\left(\begin{matrix}\phi&\psi\\-\psi^t&0
\end{matrix}\right).$$

\begin{prp}\label{prop:KU}
\begin{enumerate}
\item Let $dim\ V=2h$, consider Grassmannnian $Grass(2h-1,  V)$ with tautological sequence
$$0\rightarrow {\mathcal R}\rightarrow V\times Grass(2h-1, V)\rightarrow{\mathcal Q}\rightarrow 0,$$
$rk\ {\mathcal R}=2h-1$, $rk\ {\mathcal Q}=1$.
Consider the subbundle

$$\xi ={\mathcal Q}^*\otimes{\mathcal R}^*\oplus U\otimes {\mathcal Q}^*$$
in the trivial bundle with fibre $\bigwedge^2 V^*\oplus U\otimes V^*$.
We have 
$$\bigwedge^p\xi = \oplus_{i+j=p} S_i{\mathcal Q}^*\otimes\bigwedge^i {\mathcal R}^*\otimes\bigwedge^j U\otimes S_j {\mathcal Q}^*.$$

We get the complex $(D^0(\rho)_\bullet ,d)$ constructed in \cite{KU} for the pair $(u,2h)$.
Since we work on the projective space the resulting complex is really characteristic free, where in terms of the top cohomology groups we need to use Weyl functors instead of Schur functors.

\item Let $dim\ V=2h+1$, consider Grassmannnian $Grass(2h,  V)$ with tautological sequence
$$0\rightarrow {\mathcal R}\rightarrow V\times Grass(2h, V)\rightarrow{\mathcal Q}\rightarrow 0,$$
$rk\ {\mathcal R}=2h$, $rk\ {\mathcal Q}=1$.
Consider the subbundle

$$\xi ={\mathcal Q}^*\otimes{\mathcal R}^*\oplus U\otimes {\mathcal Q}^*$$
in the trivial bundle with fibre $\bigwedge^2 V^*\oplus U\otimes V^*$.
We have 
$$\bigwedge^p\xi = \oplus_{i+j=p} S_i{\mathcal Q}^*\otimes\bigwedge^i {\mathcal R}^*\otimes\bigwedge^j U\otimes S_j {\mathcal Q}^*.$$

We get the complex $(D^0(\rho)_\bullet ,d)$ constructed in \cite{KU} for the pair $(u,2h+1)$.
Since we work on the projective space the resulting complex is really characteristic free, where in terms of the top cohomology groups we need to use Weyl functors instead of Schur functors.
\end{enumerate}
\end{prp}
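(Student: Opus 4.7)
My plan is to verify the two assertions in tandem: first the exterior power decomposition of $\xi$, then the identification of the resulting Kempf--Lascoux--Weyman complex with the Kustin--Ulrich complex $(D^0(\rho)_\bullet, d)$.

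For the exterior power decomposition, since $\xi$ is the direct sum $\mathcal{Q}^* \otimes \mathcal{R}^* \oplus U \otimes \mathcal{Q}^*$, one has
\[
\bigwedge^p \xi \;=\; \bigoplus_{i+j=p} \bigwedge^i(\mathcal{Q}^* \otimes \mathcal{R}^*) \otimes \bigwedge^j(U \otimes \mathcal{Q}^*).
\]
Because $\mathcal{Q}^*$ is a line bundle, $\bigwedge^i(\mathcal{Q}^* \otimes \mathcal{R}^*) = (\mathcal{Q}^*)^{\otimes i} \otimes \bigwedge^i \mathcal{R}^* = S_i \mathcal{Q}^* \otimes \bigwedge^i \mathcal{R}^*$, and symmetrically $\bigwedge^j(U \otimes \mathcal{Q}^*) = \bigwedge^j U \otimes S_j \mathcal{Q}^*$. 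Substitution into the display gives the claimed formula, and this argument is uniform in the two parts.

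To identify the resulting complex with the one of \cite{KU}, I would apply the geometric method of \cite{JWm03}. The subbundle $\xi$ inside the trivial bundle with fibre $\bigwedge^2 V^* \oplus U \otimes V^*$ cuts out an incidence variety $Z \subset G \times \operatorname{Spec}(A)$, where $G = \operatorname{Grass}(2h-1, V)$ in part (1) and $G = \operatorname{Grass}(2h, V)$ in part (2); in both cases $G$ is in fact a projective space since $\mathcal{Q}$ has rank one, which keeps the relative geometry transparent. The key verifications are that the image of $Z$ under projection to $\operatorname{Spec}(A)$ is (an open subvariety of) the Schubert variety $Y^{y_\ell}$ identified in Theorem \ref{ourthm}, that $Z$ desingularises it, and that the hypotheses ensuring the higher direct images of $\bigwedge^\bullet \xi$ match the geometric method are satisfied. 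The basic theorem of \cite{JWm03} then delivers a free resolution whose $p$-th term is $\bigoplus_{q \ge 0} H^q(G, \bigwedge^{p+q}\xi) \otimes A(-p-q)$.

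Finally, to compute those cohomology groups, I would plug in the decomposition from the first step, reducing to summands $S_m \mathcal{Q}^* \otimes \bigwedge^i \mathcal{R}^* \otimes \bigwedge^j U$ with $m = i + j$. Bott's theorem on the projective space $G$ evaluates each such cohomology explicitly, and using Weyl functors in place of Schur functors, as advertised, makes the output characteristic-free. Matching these summands with the $p$-th module of $(D^0(\rho)_\bullet, d)$ as displayed in \cite{KU}, for both parities of $\dim V$ separately, completes the identification. The main obstacle I anticipate is bookkeeping: reconciling Kustin and Ulrich's indexing conventions with the Bott output, and verifying the hypotheses of the basic theorem (birationality of $Z$ onto its image, and vanishing of the relevant higher direct images) so that the push-forward really does compute a resolution. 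The differentials themselves lie outside the reach of this method, as the introductory text emphasizes; the content of the proposition is the equality of the underlying graded free modules, i.e.\ of the Betti tables.
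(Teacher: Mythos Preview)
Your approach is essentially the same as the paper's: both decompose $\bigwedge^p\xi$ via the line-bundle trick, push down the Koszul complex using the geometric method of \cite{JWm03}, and invoke Borel--Weil--Bott on the projective space to read off the terms; the characteristic-free claim is justified in both cases by noting that only exterior powers and top-cohomology Weyl functors appear. The paper goes one step further than your outline by actually carrying out the Bott computation---writing the relevant weight as $(-p,0^{v-1-i},(-1)^i)$ and sorting the three cases ($p=0$; $p=v-i$; $p\ge v+1$) that contribute---and then listing the resulting graded free modules $\FF_s$ explicitly, whereas you leave this as bookkeeping. One small correction: the image of $Z$ is not the Schubert variety $Y^{y_\ell}$ itself but a transversal linear section of it (the lower-right block of the skew matrix is set to zero), as the paper notes in the remark following the proposition; this does not affect the Betti table, which is the stated content.
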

\begin{proof} The idea is to push down the Koszul complex  and use the Borel--Weil--Bott theorem to calculate the relevant cohomology. Let us start with the case of $dim\ G=2h$. Pushing down $\bigwedge^p\xi$ we get the weights
$(-p, 0^{v-1-i}, -1^{i})$. They give a nonzero contribution in the cases of $p=0$ (contribution to $H^0$), $p=v-i$, $0\le i\le g-1$ (contribution to $H^{v-i-1}(\bigwedge^{v-i}\xi)$) and $p\ge v+1$ when they give a contribution to top cohomology $H^{v-1}(\bigwedge^p\xi)$. Notice that for the terms coming from middle homologies occur in the first term of our complex $\FF_\bullet$ and the top homology terms come in terms $\FF_j$ with $j\ge 2$. Notice also that for terms on $\FF_1$ we have $i+j=v-i$, so $j=v-2i$ is even. So we get a complex
$\FF_\bullet$ with
$$\FF_s =\oplus_{i+j=s+v-1, 0\le i\le v-1} S_{(-1)^{v-1-i}, (-2)^i, -s)}G\otimes\bigwedge^j U\otimes A(-s-v+1),$$
$$\FF_1=\oplus_{0\le i\le v-1, i\ \rm{even}} S_{(-1)^v}G\otimes\bigwedge^i U\otimes A(-v+i),$$
$$\FF_0=A.$$

Notice that our complex is characteristic free because the terms from $\FF_1$ are just tensor products of exterior powers, and the terms $\FF_s$ with $s\ge 2$ come from the top cohomology groups of irreducible homogeneous bundles on the projective space, so they are Weyl functors. In conclusion,  this complex  has length $u$ as the last term occurs for $i=v-1, j=u$. It is a free resolution of the $A$-module $A/J(v,u)$ for a perfect ideal $J(v,u)$ of codimension $u$.

For the case $dim\ V=2h+1$ the reasoning is practically the same. The terms are again
$(-p, 0^{v-1-i}, -1^{i})$. They give a nonzero contribution in the cases of $p=0$ (contribution to $H^0$), $p=v-i$, $0\le i\le v-1$ (contribution to $H^{v-i-1}(\bigwedge^{v-i}\xi)$) and $p\ge v+1$ when they give a contribution to top cohomology $H^{v-1}(\bigwedge^p\xi)$. Notice that for the terms coming from middle homologies occur in the first term of our complex $\FF_\bullet$ and the top homology terms come in terms $\FF_j$ with $j\ge 2$. Notice also that for terms on $\FF_1$ we have $i+j=v-i$, so $j=v-2i$ is odd. So we get a complex
$\FF_\bullet$ with
$$\FF_s =\oplus_{i+j=s+v-1, 0\le i\le v-1} S_{(-1)^{v-1-i}, (-2)^i, -s)}V\otimes\bigwedge^j U\otimes A(-s-v+1),$$
$$\FF_1=\oplus_{0\le i\le v-1, i\ \rm{odd}} S_{(-1)^v}V\otimes\bigwedge^iU\otimes A(-v+i),$$
$$\FF_0=A.$$

  This complex also has length $u$ as the last term occurs for $i=v-1, j=u$. It is a free resolution of the $A$-module $A/J(v,u)$ for a perfect ideal $J(v,u)$ of codimension $u$.

\end{proof}

\begin{rmk}

In both cases one can see easily that we get the free resolution of the Schubert variety $Y^{y_{u-2}}$ intersected with the linear subspace given by $u\choose 2$ conditions making the right lower corner of the matrix
$$\left(\begin{matrix}\phi&\psi\\-\psi^t&0
\end{matrix}\right)$$
equal to zero. Codimensions of both ideals are the same, so the intersection is transversal so the Betti table of the resolutions of  $I(Y^{y_{u-2}})$ and our ideal $J(v,u)$  are the same.
\end{rmk}

\subsection{Residual intersections for the minuscule exceptional cases}

The main reference for this section is \cite{FTW20}. As for the whole of this section, to perform our explicit computations we always consider the intersection of our opposite Schubert varieties with the big open cell. Let $R = \mathbb{C}[G/P_k \cap C^{w_0}]$

\subsubsection{Residual intersections in $E_6/P_1$}\label{sec:typee6}
We have  $c = 4, d=1, t = 2$ and  $x_2 = 6 ,x_1 = 5,u = 4, y_1 = 2, z_1 = 3, z_2 = $.
Up to codimension $3$ there is one Schubert variety per codimension and each is a complete intersection. In codimension $4$ there are two subvarieties, an almost complete intersection $Y^{2}$ and a hyperplane section $Y^{3}$ of the codimension $3$ Gorenstein ideal generated by the maximal Pfaffians of a $5\times 5$ skewsymmetric matrix (with linear entries in $R$). We have $I_{2} := I(Y^{2}) = (\bar{z}_1,\bar{z}_2,\bar{z}_3,\bar{z}_4, \bar{z}_{5})$, and 
$I_{3} := I(Y^{3}) =  (\bar{z}_1,\bar{z}_2,\bar{z}_3,\bar{z}_4,z_5,y_{1234})$. The minimal free resolution of $Y^{2}$ is of the form:
\[ 
0 \rightarrow R(-7) \oplus R(-6) \rightarrow R^{10}(-5) \rightarrow R^{11}(-4) \oplus R(-3) \rightarrow R^5(-2) \rightarrow R.
\]
The minimal free resolution of $Y_{3}$ is of the form:
\[
0 \rightarrow R(-6) \rightarrow R^5(-4) \oplus R(-5) \rightarrow R^5(-3) \oplus R^5(-3) \rightarrow R(-1) \oplus R^5(-2) \rightarrow R. 
\]

Furthermore, the ideals of these varieties are linked by $a_1 = (p_{\emptyset},p_{6},p_{5},p_4)$. Indeed, by the theory of linkage \cite{ulrich} the dual of the reduced mapping cone of the resolution of the complete intersection to the resolution of $Y_{2}$ gives a resolution for $Y_{3}$ (up to a grading shift). Let $a_2 = (p_{\emptyset},p_{6},p_{5},p_4,p_{2})$. Then $ I_{1} := I(Y^{1}) = a_{2}: I_{3}$. In the notation of \cite{FTW20}, we have $I_{3} = J_{23}, I_{2} = J_{22}, I_{1} = J_{17}.$ Below include some computations in Macaulay2, where we also use the notation from \cite{FTW20}. The translation between our notation and \cite{FTW20} is given by: 

\SaveVerb{z5}=z_5= 
\SaveVerb{zb5}=zb_5= 
\SaveVerb{zb4}=zb_4= 
\SaveVerb{zb3}=zb_3= 
\SaveVerb{zb2}=zb_2= 
\SaveVerb{zb1}=zb_1= 
\SaveVerb{y}=y_1234=

\[
 \UseVerb{zb1}= {p_{\emptyset}}^{*}, \UseVerb{zb2} = {p_{6}}^{*}, \UseVerb{zb3} = {p_{5}}^{*},
  \UseVerb{zb4} = {p_{4}}^{*},  \UseVerb{zb5} = {p_{2}}^{*}, \UseVerb{z5} =  {p_{3}}^{*},
  \UseVerb{y} = {p_{1}}^{*}.
\]

The rest of the variables in the calculations below do not appear as vertices of the graph $\mathcal{G}_{6}$.

%
%

\begin{figure}[h]
\label{graphe6p1}
\includegraphics[scale = 0.8]{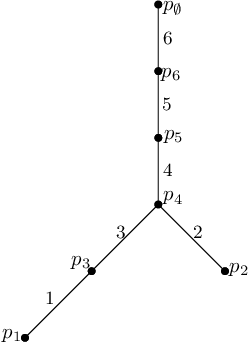}
\caption{The graph $\mathcal{G}_{6}$ for $E_{6}$. }
\end{figure}

\begin{verbatim}


 
R = QQ[y_0,y_12,y_13,y_14,y_15,y_23,y_24,y_25,y_34,y_35,y_45,y_1234,y_1235,y_1245,y_1345,y_2345]

zb_1 = y_15*y_1234 - y_14*y_1235 +y_13*y_1245 - y_12*y_1345
zb_2 = y_25*y_1234 - y_24*y_1235 +y_23*y_1245 - y_12*y_2345
zb_3 = y_35*y_1234 -y_34*y_1235 +y_23*y_1345 - y_13*y_2345
zb_4 = y_45*y_1234 - y_34*y_1245 +y_24*y_1345 -y_14*y_2345
zb_5 = y_45*y_1235 - y_35*y_1245 + y_25*y_1345- y_15*y_2345
z_5 = y_0*y_1234- y_34*y_12 +y_24*y_13 -y_23*y_14
z_4 =y_0*y_1235 - y_35*y_12 +y_25*y_13 -y_15y_23
z_3 = y_0*y_1245 -y_45*y_12 +y_25*y_14 - y_24*y_15
z_2 = y_0*y_1345 - y_45*y_13 +y_35*y_14 - y_34*y_15
z_1 = y_0*y_2345 - y_45*y_23 +y_35*y_24 - y_34*y_25

J23 = ideal(zb_1, zb_2, zb_3, zb_4, zb_5, y_1234)    
      = ideal(y_15*y_1234 - y_14*y_1235 +y_13*y_1245 - y_12*y_1345,
                  y_25*y_1234 - y_24*y_1235 +y_23*y_1245 - y_12*y_2345,
                  y_35*y_1234 -y_34*y_1235 +y_23*y_1345 - y_13*y_2345,
                  y_45*y_1234 - y_34*y_1245 +y_24*y_1345 -y_14*y_2345,
                   y_0*y_1234- y_34*y_12 +y_24*y_13 -y_23*y_14,y_1234)

J22 = ideal( y_15*y_1234 - y_14*y_1235 +y_13*y_1245 - y_12*y_1345,
                   y_25*y_1234 - y_24*y_1235 +y_23*y_1245 - y_12*y_2345,
                   y_35*y_1234 -y_34*y_1235 +y_23*y_1345 - y_13*y_2345,
                    y_45*y_1234 - y_34*y_1245 +y_24*y_1345 -y_14*y_2345,]
                    y_45*y_1235 - y_35*y_1245 + y_25*y_1345- y_15*y_2345)
 
a_1 = ideal( y_15*y_1234 - y_14*y_1235 +y_13*y_1245 - y_12*y_1345,
                     y_25*y_1234 - y_24*y_1235 +y_23*y_1245 - y_12*y_2345, 
                     y_35*y_1234 -y_34*y_1235 +y_23*y_1345 - y_13*y_2345, 
                     y_45*y_1234 - y_34*y_1245 +y_24*y_1345 -y_14*y_2345)
 
 i59 : a_1:J22 == J23
o59 = true

i60 : a_1:J23 == J22
o60 = true

 J17 = ideal(y_15*y_1234 - y_14*y_1235 +y_13*y_1245 - y_12*y_1345, 
                   y_25*y_1234 - y_24*y_1235 +y_23*y_1245 - y_12*y_2345, 
                   y_35*y_1234 -y_34*y_1235 +y_23*y_1345 - y_13*y_2345,
                   y_45*y_1234 - y_34*y_1245 +y_24*y_1345 -y_14*y_2345,
                   y_45*y_1235 - y_35*y_1245 + y_25*y_1345- y_15*y_2345, 
                   y_0*y_1234- y_34*y_12 +y_24*y_13 -y_23*y_14,
                   y_0*y_1235 - y_35*y_12 +y_25*y_13 -y_15*y_23, 
                  y_0*y_1245 -y_45*y_12 +y_25*y_14 - y_24*y_15,
                 y_0*y_1345 - y_45*y_13 +y_35*y_14 - y_34*y_15, 
                 y_0*y_2345 - y_45*y_23 +y_35*y_24 - y_34*y_25)

a_2 =  ideal( y_15*y_1234 - y_14*y_1235 +y_13*y_1245 - y_12*y_1345,
 y_25*y_1234 - y_24*y_1235 +y_23*y_1245 - y_12*y_2345,
  y_35*y_1234 -y_34*y_1235 +y_23*y_1345 - y_13*y_2345,
   y_45*y_1234 - y_34*y_1245 +y_24*y_1345 -y_14*y_2345, 
    y_0*y_1234- y_34*y_12 +y_24*y_13 -y_23*y_14)

i67 : a_2: J23 == J17

o67 = true
\end{verbatim}

\subsubsection{Residual intersections in $E_7/P_7$}
\label{sec:typee7}

\SaveVerb{Q}=Q= 
\SaveVerb{f1}=f_1= 
\SaveVerb{f2}=f_2= 
\SaveVerb{f3}=f_3= 
\SaveVerb{f4}=f_4= 
\SaveVerb{f5}=f_5= 
\SaveVerb{f6}=f_6= 
\SaveVerb{f7}=f_7= 
\SaveVerb{I2}=I2= 
\SaveVerb{I3}=I3= 
\SaveVerb{I1}=I1= 

In this case the situation is very similar to that of $E_{6}/P_{6}$. We have two linked ideals (the \textit{Ulrich pair} in codimension 5) $ \UseVerb{I2} = I(Y^{2}) , \UseVerb{I3} = I(Y^3)$, and one ideal $ \UseVerb{I1} = I(Y^{1}) $ in codimension 6vhc. We include the relevant explicit calculations using Macaulay2, where we use the following notation:

\[
 \UseVerb{Q}= {p_{\emptyset}}^{*}, \UseVerb{f1} = {p_{7}}^{*}, \UseVerb{f2} = {p_{6}}^{*},
  \UseVerb{f3} = {p_{6}}^{*},  \UseVerb{f4} = {p_{4}}^{*}, \UseVerb{f5} =  {p_{2}}^{*}, \UseVerb{f6} =  {p_{3}}^{*}, \UseVerb{f7} =  {p_{1}}^{*},
  \UseVerb{y} = {p_{1}}^{*}.
\]

For more details we refer the reader to \cite{FTW20}.

\begin{figure}[h]
\label{graphe7p7}
\includegraphics[scale = 0.6]{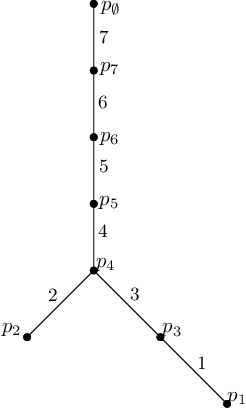}
\caption{The graph $\mathcal{G}_{7}$ for $E_{7}$}
\end{figure}

\begin{verbatim}
R = QQ[x_1,x_2,x_3,x_4,x_5,x_6,x_7,x_8,x_9,x_10,x_11,x_12,x_13,x_14, x_15,x_16,x_17,x_18,x_19,
x_20,x_21,x_22,x_23,x_24,x_25,x_26,x_27]

Qe7e6 = x_13*x_14*x_15-x_11*x_15*x_16-x_12*x_13*x_17+x_10*x_16*x_17+x_9*x_15*x_18-x_8*x_17*x_18+
x_11*x_12*x_19-x_10*x_14*x_19+x_5*x_18*x_19-x_7*x_15*x_20+x_6*x_17*x_20-x_4*x_19*x_20-x_9*x_12*x_21+
x_8*x_14*x_21-x_5*x_16*x_21+x_3*x_20*x_21+x_7*x_12*x_22-x_6*x_14*x_22+x_4*x_16*x_22-x_3*x_18*x_22+
x_9*x_10*x_23-x_8*x_11*x_23+x_5*x_13*x_23-x_2*x_20*x_23+x_1*x_22*x_23-x_7*x_10*x_24+x_6*x_11*x_24-
x_4*x_13*x_24+x_2*x_18*x_24-x_1*x_21*x_24+x_7*x_8*x_25-x_6*x_9*x_25+x_3*x_13*x_25-x_2*x_16*x_25+
x_1*x_19*x_25-x_5*x_7*x_26+x_4*x_9*x_26-x_3*x_11*x_26+x_2*x_14*x_26-x_1*x_17*x_26+x_5*x_6*x_27-
x_4*x_8*x_27+x_3*x_10*x_27-x_2*x_12*x_27+x_1*x_15*x_27

Q = Qe7e6
for i in 1..27 list f_i = diff(x_i, Q)
I1 = ideal(Q, f_1, f_2,f_3, f_4, f_5, f_6, f_8, f_10, f_12, f_15, x_27)
I3 = ideal(Q, f_1, f_2,f_3, f_4, f_5)
I51 = ideal(Q,f_1,f_2,f_3,f_4,f_6,f_7)
J = ideal(Q,f_1,f_2,f_3,f_4,f_6)
i16 : J:I2 == I1 
o16 = true
I = ideal(Q,f_1,f_2,f_3,f_4)
i30 : I:I2 == I3
o30 = true
 

\end{verbatim}

\section{Non-minuscule examples}

For type $E_{n}$, we will be interested in $X^{w}$ for $w = s_{i} \cdots s_{4} s_{2}$ given by consecutive (non-repeating) reflections on the Dynkin diagram, with $i = 1,3,5,6,..., n$. We will abbreviate $X^{w}$ as just $X^{i}$ as before. We will denote the corresponding opposite Demazure module by $M$. 

\subsection{$E_{6}$}

\subsubsection{Codimension three}
For $i=3$  we have the following decomposition

\begin{align*}
\operatorname{res}^{\mathfrak{g}}_{\mathfrak{g}^{i}} ( V(\omega_{2}) ) ={ \color{blue}V(\omega_{6})} \oplus {\color{red} V(\omega_{1}+\omega_{4}) } & \oplus V(\omega_{2}+ \omega_{6})  \oplus V(\omega_{1}+\omega_{5}) \oplus V(\omega_{2}) \\
 & \oplus V(2 \omega_{2}) 
\end{align*}

\noindent  The Pl\"ucker coordinates vanishing on $X^{3}$ are those dual to ${ \color{blue}V(\omega_{6})}$. These are just the extremal Pl\"ucker coordinates $p_{w}$ where $w$ does not involve the reflection $s_{3}$. Now, $w \cdot v$ is the lowest weight vector for ${\color{red} V(\omega_{1}+\omega_{4})}$. Since the entirety of this $i$- graded component is. in $M$, all higher components are in $M$ as well. 

\subsubsection{Codimension four}

For $i =1$: 

\begin{align*}
\operatorname{res}^{\mathfrak{g}}_{\mathfrak{g}^{i}} ( V(\omega_{2}) )={ \color{blue}V(\omega_{3})} &\oplus V(\omega_{5})  \oplus V(\omega_{2}) \\
&\oplus {\color{blue} \mathbb{C}}
\end{align*}
 
 \noindent The extremal Pl\"ucker coordinates are dual to ${ \color{blue}V(\omega_{3})}$. However, an additional, non extremal, Pl\"ucker coordinate, also vanishes on $X^{1}$.

 \subsection{$E_{7}$}
 
 \subsubsection{Codimension three} (See \cite{KHLJW18} for this subsection.)
 
 For $i =3$:
 
 \begin{align*}
 \operatorname{res}^{\mathfrak{g}}_{\mathfrak{g}^{i}} ( V(\omega_{2}) ) = {\color{blue} V(\omega_{7})} \oplus V(\omega_{1} + \omega_{5}) \oplus \cdots \oplus V(\omega_{1}+ \omega_{5}) \oplus V(\omega_{2})
 \end{align*}
 
 \noindent The six extremal Pl\"ucker coordinates $p_{w}$ where $w$ doesn't involve $s_{3}$ span $V(\omega_{2}) \subset V$ dual to $V(\omega_{7})$ and cut out $X^{3}$ set-theoretically. 
 
  For $i =5$:
 
 \begin{align*}
 \operatorname{res}^{\mathfrak{g}}_{\mathfrak{g}^{i}} ( V(\omega_{2}) ) = {\color{blue} V(\omega_{1})} \oplus V(\omega_{4} + \omega_{7}) \oplus \cdots \oplus V(\omega_{3}+ \omega_{6}) \oplus V(\omega_{2})
 \end{align*}
 
 \noindent The five extremal Pl\"ucker coordinates $p_{w}$ cut out $X^{5}$ set-theoretically.

 \subsubsection{Codimension four}
 
 For $i = 1$: 
 
 \begin{align*}
  \operatorname{res}^{\mathfrak{g}}_{\mathfrak{g}^{i}} ( V(\omega_{2}) ) = {\color{blue} V(\omega_{2})} &\oplus {\color{red}V(\omega_{5})} \oplus {\color{orange} V(\omega_{2})  \oplus V(\omega_{5}) \oplus V(\omega_{2})} \\
  &\oplus {\color{blue} V(\omega_{7})} \oplus { \color{orange} V(\omega_{3}+ \omega_{7})  \oplus V(\omega_{7})}
  \end{align*}
 
 There are thirty-two extremal Pl\"ucker coordinates dual to $V(\omega_{2})$ which generate $X^{1}$ and twelve more non-extremal ones dual to $V(\omega_{7})$.

 For $i = 6$:
 
 \begin{align*}
   \operatorname{res}^{\mathfrak{g}}_{\mathfrak{g}^{i}} ( V(\omega_{2}) ) = {\color{blue} V(\omega_{5})} & \oplus {\color{red}V(\omega_{3} + \omega_{7})} & \oplus  V(\omega_{1}+ \omega_{5}) & \oplus V(\omega_{1} + \omega_{7}) & \oplus V(\omega_{1} + \omega_{2}) & \oplus V(\omega_{3} + \omega_{1}) \oplus V(\omega_{2})\\
& \oplus {\color{blue}V(\omega_{7})} & \oplus  V(\omega_{2}) & \oplus V(\omega_{4} + \omega_{7}) & \oplus V(\omega_{5}) & \oplus V(\omega_{7}) \\
&  & \oplus  V(\omega_{2} + 2 \omega_{7}) & \oplus V(\omega_{1} + \omega_{7}) & \oplus V(\omega_{5} + 2 \omega_{1}) & 
 \end{align*}
 
 \noindent There are sixteen extremal Pl\"ucker coordinates dual to ${ \color{blue}V(\omega_{5}) }$ vanishing on $X^{6}$ and two more non-extremal Pl\"ucker coordinates dual to ${\color{blue} V(\omega_{7})}$.

 \subsubsection{Codimension five}
 
 For $i = 7$: 
 
 \begin{align*}
    \operatorname{res}^{\mathfrak{g}}_{\mathfrak{g}^{i}} ( V(\omega_{2}) ) = {\color{blue} V(\omega_{2})}  & \oplus {\color{red}V(\omega_{5})}  \oplus V(\omega_{3}) \oplus V(\omega_{2}) \\
     & \oplus { \color{blue} V(\omega_{1}) }  \oplus V(\omega_{6})
 \end{align*}
 
  \noindent Dual to ${\color{blue} V(\omega_{2})}$, there are seventy-two extremal Pl\"ucker coordinates as well as six non-extremal Pl\"ucker coordinates vanishing on $X^{7}$. There are twenty-seven more non-extremal Pl\"ucker coordinates dual to ${\color{blue} V(\omega_{1})}$.

\begin{rmk}

In forthcoming work, we plan to investigate and describe the intersection of the opposite Schubert varieties with the opposite big open cell, $Y^{j}$, as was carried out for the minuscule cases in previous sections.
\end{rmk}

\bibliographystyle{alpha}


\end{document}